\title{Poisson approximation for non-backtracking random walks}
\author{{Noga Alon\thanks{Schools of Mathematics and Computer Science,
Raymond and Beverly Sackler Faculty of Exact Sciences, Tel Aviv
University, Tel Aviv, 69978, Israel. Email: nogaa@tau.ac.il.
Research supported in part by a USA-Israeli BSF grant, by the Israel
Science Foundation and by the Hermann Minkowski Minerva Center for
Geometry at Tel Aviv University.}} \quad {Eyal Lubetzky
\thanks{ School of Mathematics, Raymond and Beverly
Sackler Faculty of Exact Sciences, Tel Aviv University, Tel Aviv,
69978, Israel. Email: lubetzky@tau.ac.il. Research partially
supported by a Charles Clore Foundation Fellowship.}}}
\date{}
\newtheorem{theorem}{Theorem}[section]
\newtheorem{claim}[theorem]{Claim}
\newtheorem{corollary}[theorem]{Corollary}
\newtheorem{proposition}[theorem]{Proposition}
\renewcommand{\epsilon}{\varepsilon}
\newcommand{\var}{\operatorname{Var}}
\newcommand{\Po}{\operatorname{Po}}
\newtheoremstyle{upright}%
        {8pt plus2pt minus4pt}%
        {8pt plus2pt minus4pt}%
        {\upshape}%
        {}%
        {\bfseries}%
        {:}%
        {1em}%
        {}%
\theoremstyle{upright}
\newtheorem{remark}[theorem]{Remark}
\newcommand{\ignore}[1]{}
\begin{document}
\maketitle

\begin{abstract}
Random walks on expander graphs were thoroughly studied, with the
important motivation that, under some natural conditions, these
walks mix quickly and provide an efficient method of sampling the
vertices of a graph. The authors of \cite{ABLS} studied
non-backtracking random walks on regular graphs, and showed that
their mixing rate may be up to twice as fast as that of the simple
random walk. As an application, they showed that the maximal number
of visits to a vertex, made by a non-backtracking random walk of
length $n$ on a high-girth $n$-vertex regular expander, is typically
$(1+o(1))\frac{\log n}{\log\log n}$, as in the case of the balls and
bins experiment. They further asked whether one can establish the
precise distribution of the visits such a walk makes.

In this work, we answer the above question by combining a
generalized form of Brun's sieve with some extensions of the ideas
in \cite{ABLS}. Let $N_t$ denote the number of vertices visited
precisely $t$ times by a non-backtracking random walk of length $n$
on a regular $n$-vertex expander of fixed degree and girth $g$. We
prove that if $g=\omega(1)$, then for any fixed $t$, $N_t/n$ is
typically $\frac{1}{\mathrm{e}t!}+o(1)$. Furthermore, if
$g=\Omega(\log\log n)$, then $N_t/n$ is typically
$\frac{1+o(1)}{\mathrm{e}t!}$ uniformly on all $t \leq
(1-o(1))\frac{\log n}{\log\log n}$ and $0$ for all $t \geq
(1+o(1))\frac{\log n}{\log\log n}$. In particular, we obtain the
above result on the typical maximal number of visits to a single
vertex, with an improved threshold window. The essence of the proof
lies in showing that variables counting the number of visits to a
set of sufficiently distant vertices are asymptotically independent
Poisson variables.
\end{abstract}

\section{Introduction}
\subsection{Background and definitions}
Random walks on graphs have played a major role in Theoretical and
Applied Computer Science, as under some natural requirements
(related to the notion of expander graphs), these walks converge
quickly to a unique stationary distribution, and enable efficient
sampling of this distribution. This fact was exploited for example
in \cite{AKLLR}, \cite{BF} and \cite{Reingold}, in the study of
space efficient algorithms for $S-T$ connectivity in undirected
graphs. Another well known example is the conservation of random
bits in the amplification of randomized algorithms (we will
elaborate on this point later on). In many applications for random
walks, it seems that using non-backtracking random walks may yield
better results, and that these walks possess better random-looking
properties than those of simple random walks. This motivated the
authors of \cite{ABLS} to study the mixing-rate of a
non-backtracking random walk, and show that it may be up to twice as
fast as that of a simple random walk. They further show that
 the number of times that such a walk visits the vertices
of a high-girth expander is random-looking, in the sense that its
maximum is typically asymptotically the same as the maximal load of
the classical balls and bins experiment. In this paper, we further
examine this setting, and answer a question raised in \cite{ABLS} by
giving a precise description of the limiting distribution of these
visits.

We briefly mention some well known properties of random walks on
regular graphs; for further information, see, e.g., \cite{Lovasz},
\cite{Sinclair}. Let $G = (V,E)$ denote a $d$-regular undirected
graph on $n$ vertices. A random walk of length $k$ on $G$ from a
given vertex $w_0$ is a uniformly chosen member $W \in \mathcal{W}$,
where $\mathcal{W}=\{(w_0,w_1,\ldots,w_k) : w_{i-1}w_i \in E\}$ is
the set of all paths of length $k$ starting from $w_0$.
Alternatively, a random walk on $G$, $\mathcal{M}$, is a Markov
chain whose state space is $V$, where the transition probability
from $u$ to $v$ is $P_{u v} = \mathbf{1}_{u v \in E} / d$. The
transition probability matrix of $\mathcal{M}$ is doubly stochastic,
and the uniform distribution $\pi(u)=1/n$ is a stationary
distribution of $\mathcal{M}$. If $G$ is connected and non-bipartite
then $\mathcal{M}$ is irreducible and aperiodic, in which case it
converges to the unique stationary distribution $\pi$, regardless of
the starting point $w_0$. These two sufficient and necessary
conditions have a clear formulation in terms of the spectrum of $G$,
which also dictates the rate at which $\mathcal{M}$ converges to
$\pi$.

The adjacency matrix of $G$ is symmetric and thus has $n$ real
eigenvalues, all at most $d$ in absolute value (by the
Perron-Frobenius Theorem). Let $d=\lambda_1 \geq \lambda_2 \geq
\ldots \geq \lambda_n$ denote these eigenvalues. It is simple and
well known (see, e.g., \cite{AlgabraicGraphTheory}) that the
multiplicity of the eigenvalue $d$ is equal to the number of
connected components of $G$, and the minimal eigenvalue $\lambda_n$
is equal to $-d$ iff $G$ has a bipartite connected component.
Therefore, letting $ \lambda = \max_{i
> 1} |\lambda_i|$ denote the maximal absolute value of all non-trivial eigenvalues,
we obtain that $G$ is connected and non-bipartite iff $\lambda < d$.
The quantity $d-\lambda$ is often referred to as the {\em spectral
gap} of $G$, and is strongly related to the expansion properties of
the graph. In particular, when $d-\lambda$ is bounded from below by
a constant, we call the graph an {\em expander} (a closely related
notion of expander graphs is defined by the expansion ratio of each
set of at most $n/2$ vertices to its neighbor vertices in the
graph). See \cite{KS} for a survey on the many fascinating
pseudo-random properties exhibited by such graphs.

An $(n,d,\lambda)$ graph is a $d$-regular graph on $n$ vertices,
whose largest non-trivial eigenvalue in absolute value is $\lambda$.
As mentioned above, the condition $\lambda < d$ is sufficient and
necessary for the random walk on $G$ to converge to $\pi$. In this
case, the quantity $\lambda / d$ governs the rate of this
convergence: the {\em mixing rate} of $\mathcal{M}$ is defined to be
$\limsup_{k\to\infty}|P_{u v}^{(k)}-\pi(v) |^{1/k}$, where
$P^{(k)}_{u v}$ is the probability that $\mathcal{M}$ reaches $v$ in
the $k$-th step given that it started from $u$. It is well known
(see, for instance, \cite{Lovasz}) that the mixing rate of the
simple random walk on an $(n,d,\lambda)$ graph is $\lambda / d$, and
that in fact, the $L_2$ distance between $\pi$ and the distribution
of $\mathcal{M}$ after $k$ steps is at most $(\lambda / d)^k$.
Therefore, after $\Omega(\log_{d/\lambda} n)$ steps, the $L_2$
distance between the distribution of the simple random walk and the
uniform distribution is at most $1/n^{\Omega(1)}$.

A useful and well known application of random walks on expander
graphs is the following result, related to the conservation of
random bits (cf., e.g., \cite{ProbMethod}, Corollary 9.28). Suppose
$G$ is an $(n,d,\lambda)$ graph, and $U$ is a predefined set of
$\alpha n$ vertices, for some $\alpha > 0$. Then a random walk of
length $k$, starting from a random vertex, avoids $U$ with
probability at most $(1-\alpha + \alpha\frac{\lambda}{d})^k$.
Indeed, the term $(1-\alpha)^k$ is the probability to miss $U$ when
selecting $k$ vertices, uniformly and independently, in which case
we would require $k \log n$ random bits for the selection process.
Using a random walk, we require only $\log n + k \log d$ random
bits, at the cost of increasing the base of the exponent by an
additive term of $\alpha \frac{\lambda}{d}$. This enables amplifying
the error-probability in randomized algorithms (such as the
Rabin-Miller primality testing algorithm) using fewer random bits:
an algorithm utilizing $s$-bit seeds can be amplified $k$ times
using $s + \Theta(k)$ random bits via a walk on a constant-degree
expander, instead of $s k$ random bits in the naive approach.

In many applications of random walks on graphs, forbidding the
random walk to backtrack appears to produce better results; an
example of this is the construction of sparse universal graphs in
\cite{AC}, where a crucial element is a non-backtracking random walk
on a high-girth expander. A non-backtracking random walk of length
$k$ on $G$, starting from some vertex $w_0$, is a sequence
$\widetilde{W}=(w_0,\ldots,w_k)$, where $w_i$ is chosen uniformly
over all neighbors of $w_{i-1}$ excluding $w_{i-2}$. The mixing-rate
of a non-backtracking random walk on a regular graph, in terms of
its eigenvalues, was computed in \cite{ABLS}, using some properties
of Chebyshev polynomials of the second kind. It is shown in
\cite{ABLS} that this rate is always better than that of the simple
random walk, provided that $d=n^{o(1)}$. In fact, the mixing rate of
the non-backtracking random walk may be up to twice faster, and the
closer the graph is to being a Ramanujan graph (that is, a graph
satisfying $\lambda \leq 2\sqrt{d-1}$), the closer the ratio between
the two mixing-rates is to $2(d-1)/d$.

As an application, the authors of \cite{ABLS} analyzed the maximal
number of visits that a non-backtracking random walk of length $n$
makes to a vertex of $G$, an $(n,d,\lambda)$ graph of fixed degree
and girth $\Omega(\log\log n)$. Using a careful second moment
argument, they proved that this quantity is typically
$(1+o(1))\frac{\log n}{\log\log n}$, as is the typical maximal
number of balls in a single bin when throwing $n$ balls to $n$ bins
uniformly at random (more information on the classical balls and
bins experiment may be found in \cite{Feller}) . In contrast to
this, it is easy to see that a typical \textbf{simple} random walk
of length $n$ on a graph as above visits some vertex $\Omega(\log
n)$ times. The authors of \cite{ABLS} further asked whether it is
possible to establish the precise distribution of the number of
visits that a non-backtracking random walk makes on a graph $G$ as
above.

In this paper, we answer the above question, by combining a
generalized form of Brun's Sieve with extensions of some of the
ideas in \cite{ABLS}. This approach shows that even if the girth of
$G$ grows to infinity arbitrarily slowly with $n$, then for any
fixed $t$, the fraction of vertices visited precisely $t$ times is
typically $(1+o(1))n/(\mathrm{e}t!)$, where the $o(1)$-term tends to
$0$ as $n\to\infty$. The extension of Brun's Sieve, which includes
an estimate on the convergence rate of the variables, is used to
treat the case where $t$ depends on $n$, and allows the
characterization of the number of vertices visited $t$ times for all
$t$. In particular, this provides an alternative proof for the
result of \cite{ABLS} of the typical maximum number of visits to a
vertex, with an improved error term. These results are summarized
below.

Throughout the paper, all logarithms are in the natural basis, and
an event, defined for every $n$, is said to occur {\em with high
probability} or {\em almost surely} if its probability tends to $1$
as $n\to\infty$.

\subsection{Results for expanders with a non-fixed girth}
A simple argument shows that the requirement for a non-fixed girth
is essentially necessary if one wishes that the number of visits at vertices
will exhibit a Poisson distribution. Indeed, if $G$ is a graph where
every vertex is contained in a cycle of a fixed length, the probability that
the walk $\widetilde{W}$ will traverse a cycle $t$ consecutive times, becomes much larger than the Poisson
probability of $t$ visits to a
vertex for a sufficiently large $t$.
Simple random walks correspond to the case of cycles of
length 2.

On the other hand, this requirement on the girth turns out to be
sufficient as-well: as long as the girth of an $(n,d,\lambda)$ graph
$G$ tends to infinity arbitrarily slowly with $n$, the number of
visits that a non-backtracking random walk makes to vertices
exhibits a Poisson distribution, as the following theorem states:

\begin{theorem}\label{thm-count-fixed}
For fixed $d \geq 3$ and fixed $\lambda < d$, let $G$ be an
$(n,d,\lambda)$ graph whose girth is at least $g=\omega(1)$. Let
$\widetilde{W}=(w_0,\ldots,w_n)$ denote a non-backtracking random
walk of length $n$ on $G$ from $w_0$, and $N_t$ the number of
vertices which $\widetilde{W}$ visits precisely $t$ times:
\begin{equation}\label{eq-Nt-def} N_t  =
\left|\big\{ v \in V(G)~:~\left|\{1\leq i\leq n:w_i=v\}\right|=t
\big\}\right|~.\end{equation} Then for every fixed $t$, $N_t/n=
1/(\mathrm{e}t!)+o(1)$ almost surely, the $o(1)$-term tending to 0
as $n\to\infty$.
\end{theorem}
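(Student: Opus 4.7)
I would use the second-moment method: establish $\mathbb{E}[N_t] = (1+o(1))\, n/(\mathrm{e}t!)$ and $\var(N_t) = o(n^2)$, from which Chebyshev's inequality yields $N_t/n = 1/(\mathrm{e}t!) + o(1)$ almost surely. Writing $X_v = |\{1 \le i \le n : w_i = v\}|$ for the visit-count at $v$, so that $N_t = \sum_v \mathbf{1}_{X_v=t}$, the two tasks reduce to showing each $X_v$ is asymptotically $\Po(1)$, and that for pairs at graph-distance at least a suitable threshold $g_0 = \omega(1)$ with $g_0 \le g$, the pair $(X_u, X_v)$ is asymptotically independent. Choosing $g_0 = \min\!\bigl(g,\,\tfrac{\log n}{10\log(d-1)}\bigr)$ keeps $g_0 = \omega(1)$ while making the radius-$g_0$ ball around any vertex of size $o(n)$, so that near pairs contribute only $o(n^2)$ to the variance.

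\textbf{Brun's sieve at a single vertex.} For fixed $r$, I would compute
\[
\mathbb{E}[(X_v)_r] \;=\; \sum_{i_1,\ldots,i_r\ \text{distinct}} \mathbb{P}\!\bigl(w_{i_1}=\cdots=w_{i_r}=v\bigr)
\]
and show it tends to $1$; by the standard Brun's sieve this gives $X_v \Rightarrow \Po(1)$ and hence $\mathbb{P}(X_v=t) \to 1/(\mathrm{e}t!)$. Partition the tuples by their minimum consecutive gap after sorting. Tuples with some gap strictly less than $g_0$ contribute nothing: in a girth-$g$ graph the radius-$(g/2)$ ball around $v$ is a tree, and the non-backtracking constraint forces the walk strictly outward within it, so a return in fewer than $g \ge g_0$ steps is impossible. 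Tuples whose consecutive gaps are all at least the NBRW mixing time $T = O(\log n)$ from \cite{ABLS} comprise $(1+o(1))n^r$ of the sum; iterating the Markov property on the directed-edge chain and applying the $L_\infty$ mixing estimate of \cite{ABLS} (which yields $|P^{(k)}(e,e') - 1/(nd)| \le \rho^k$ for some $\rho=\rho(d,\lambda)<1$), each such joint probability equals $(1+o(1))n^{-r}$, giving a main contribution of $1+o(1)$. Intermediate tuples, with some gap in $[g_0,T)$, are bounded by inserting the spectral estimate at the intermediate step: since the number of such tuples is $O(n^{r-1}\log n)$, summing the geometric $\rho^k$ over $k \in [g_0,T)$ yields a total of $O(\rho^{g_0}) + O(\log n/n) = o(1)$ by $g_0 = \omega(1)$.

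\textbf{Joint factorial moments and second moment.} For the variance, I would extend the argument to joint factorial moments $\mathbb{E}[(X_u)_r(X_v)_s]$ whenever $\mathrm{dist}_G(u,v) \ge g_0$. The distance condition together with girth kills every configuration containing a sub-$g_0$ gap (the NBRW cannot travel between $u$ and $v$, nor return to either, in fewer than $g_0$ steps), while well-spaced configurations factorize as $(1+o(1))n^{-(r+s)}$ by the same Markov-plus-mixing argument, with the intermediate regime handled identically. The multivariate Brun's sieve then delivers $(X_u,X_v) \Rightarrow (\Po(1), \Po(1))$ jointly independent, so $\mathbb{P}(X_u=X_v=t) = (1+o(1))/(\mathrm{e}t!)^2$. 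Summing over the $(1-o(1))n^2$ distant pairs reproduces the square of the first moment; the at most $n \cdot d(d-1)^{g_0-1} = o(n^2)$ close pairs each contribute $O(1)$ to the covariance and are absorbed into the error, so $\var(N_t) = o(n^2)$ and Chebyshev closes the proof.

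\textbf{Main obstacle.} The heart of the argument is the uniform control of the $r$-point return probability in the intermediate-gap regime $[g_0,T)$, which is non-empty precisely because $g$ --- and hence $g_0$ --- may grow arbitrarily slowly while the NBRW mixing time $T$ is of order $\log n$. There the walk has escaped the tree neighborhood of $v$ but mixing has not set in, so the naive $(1+o(1))/n$ approximation of the return probability is unavailable. The quantitative spectral bound of \cite{ABLS} on the directed-edge chain, supplying geometric $L_\infty$-decay at rate $\rho < 1$ uniformly in the starting state, is what drives the intermediate contribution down to $o(1)$ and bridges the gap between the possibly tiny girth and the $\Theta(\log n)$ mixing time.
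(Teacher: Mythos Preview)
Your approach is essentially identical to the paper's: both compute joint factorial moments via a three-regime gap decomposition (sub-girth gaps vanish, super-mixing-time gaps give the main term $(1+o(1))n^{-r}$, and intermediate gaps are killed by the geometric spectral bound $\widetilde{P}^{(k)}_{uv}\le 1/n + (1+o(1))\rho^k$ from \cite{ABLS}), feed these into Brun's sieve for $r=1,2$, and finish by Chebyshev. The only point the paper treats more explicitly is the initial segment: it includes the gap from position $0$ to the first index in its decomposition and accordingly restricts attention to the set $U$ of vertices at distance $\ge g$ from $w_0$ (there are $o(n)$ others), whereas your partition by ``minimum consecutive gap'' omits this initial gap and so your claim that each well-spaced joint probability equals $(1+o(1))n^{-r}$ tacitly needs $v$ far from $w_0$; this is a one-line fix.
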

As we later mention, the above theorem in fact holds even for
non-fixed $\lambda,d$ (as long as the spectral gap is large compared
to $d/g$). The essence of the proof of Theorem \ref{thm-count-fixed}
lies in proving that the variables counting the visits at vertices,
whose pairwise distances are large, are asymptotically independent
Poisson variables:
\begin{proposition}
  \label{prop-poisson-fixed}
Let $G$ be a graph as in Theorem \ref{thm-count-fixed}. For some
fixed $r$ and $\mu > 0$, let $v_1,\ldots,v_r$ denote vertices of $G$
whose pairwise distances are at least $g$. Let $\widetilde{W}$ be a
non-backtracking random walk of length $m=\mu n$ on $G$ starting
from $v_1$, and $X_i$ be the number of visits that $\widetilde{W}$
makes to $v_i$. Then
$(X_1,\ldots,X_r)\stackrel{d}{\to}(Z_1,\ldots,Z_r)$ as $n\to\infty$,
where the $Z_i$-s are i.i.d. Poisson random variables with means
$\mu$, $Z_i\sim\Po(\mu)$.
\end{proposition}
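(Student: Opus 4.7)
The plan is to use the method of joint factorial moments (the multidimensional form of Brun's sieve). Since $(X_1,\ldots,X_r)$ take values in $\mathbb{Z}_{\geq 0}^r$, joint convergence to independent Poissons $Z_i\sim\Po(\mu)$ follows once we verify that for every fixed $(t_1,\ldots,t_r)$,
$$\mathbb{E}\!\left[\prod_{i=1}^{r}(X_i)_{t_i}\right] \;\longrightarrow\; \mu^{T}, \qquad T=\sum_i t_i,$$
with $(X)_t=X(X-1)\cdots(X-t+1)$. Expanding the falling factorials gives
$$\mathbb{E}\!\left[\prod_{i}(X_i)_{t_i}\right] = \sum \Pr\!\left[w_{k_j^{(i)}}=v_i \text{ for all } i,j\right],$$
summed over time tuples $(k_j^{(i)})$ with $k_j^{(i)}\in\{1,\ldots,m\}$ distinct within each $i$; because $w_k$ occupies only one vertex at a time, only tuples with all $T$ indices globally distinct contribute.

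The first observation, combining the girth hypothesis with the pairwise distance assumption on $v_1,\ldots,v_r$, is that any surviving tuple has its $T$ sorted times pairwise at least $g$ apart: the walk cannot move between distinct $v_i$ and $v_{i'}$ in fewer than $d(v_i,v_{i'})\geq g$ steps, and the non-backtracking property together with girth $\geq g$ forces at least $g$ steps between consecutive visits to the same vertex. I would then factor the joint probability using the Markovian structure of the NBRW on the set of directed edges: conditioning successively on the directed edge traversed at each visit and invoking the spectral analysis of \cite{ABLS} (Chebyshev polynomials of the second kind applied to the spectrum of $G$), the transition operator on directed edges contracts in $L^2$ at some fixed rate $\rho=\rho(d,\lambda)<1$, yielding
$$\Pr[w_{k_{\ell+1}}=u_{\ell+1}\mid w_{k_\ell}=u_\ell,\ \text{past}]=\tfrac{1}{n}+O\!\left(\rho^{\,k_{\ell+1}-k_\ell}\right),$$
with an analogous bound for the first-visit probability. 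Multiplying these factors and summing over the $(m)_T\sim m^T$ legally spaced tuples produces the leading term $m^T/n^T=\mu^T(1+o(1))$, which also factors as $\prod_i \mu^{t_i}$ and thereby encodes the asymptotic joint independence.

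The main obstacle is the bookkeeping of the accumulated error. Since $\rho^g=o(1)$ but is not $o(1/n)$, the naive bound $(\tfrac{1}{n}+O(\rho^g))^T$ is too weak to yield $\mu^T(1+o(1))$. Instead I would expand the product of $T$ factors of the form $\tfrac{1}{n}+\varepsilon_\ell$ as a sum over subsets $S\subseteq\{1,\ldots,T\}$ indicating the indices at which $\varepsilon_\ell$ rather than $\tfrac{1}{n}$ is kept, and bound each mixed term separately. For every $\ell\in S$, the geometric sum $\sum_{k_{\ell+1}>k_\ell+g}\rho^{\,k_{\ell+1}-k_\ell}\leq\rho^g/(1-\rho)=o(1)$ replaces one factor of $\mu$ in the outer summation by $o(1)$, so every term with $S\neq\emptyset$ contributes $o(\mu^T)$. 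A separate minor point is the early phase $k=O(\log n)$ of the walk, during which $\Pr[w_k=v_i]$ may deviate significantly from $1/n$; its total contribution to any factorial moment is $O((\log n)/n)\cdot\mu^{T-1}=o(1)$ and is absorbed into the error.
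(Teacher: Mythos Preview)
Your proposal is correct and follows essentially the same strategy as the paper: reduce to joint factorial moments via the multivariate Brun sieve, factor the joint visit probability along the (directed-edge) Markov structure of the NBRW, and control each conditional factor using the mixing bound $\widetilde P^{(k)}=\tfrac1n+O(\rho^{k})$ from \cite{ABLS} together with the observation that every gap between consecutive recorded visits is at least $g$.

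The only real difference is in how the error is organized. The paper introduces an auxiliary threshold $L=(\log n)^2$, partitions the tuples into classes $\mathcal I_s$ according to the number $s$ of gaps shorter than $L$, and shows $\xi(s+1)/\xi(s)=\Theta(M)=o(1)$ where $M=\max_{i,j}\sum_{k<L}\widetilde P^{(k)}_{v_iv_j}$ (Claim~\ref{cl-visit-bound}). You instead keep the pointwise estimate $\tfrac1n+O(\rho^{g_\ell})$ at every gap and expand the product over subsets $S\subseteq[T]$, bounding each coordinate $\ell\in S$ by the geometric tail $\sum_{g_\ell\geq g}\rho^{g_\ell}=O(\rho^g)=o(1)$. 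These are two packagings of the same estimate: the paper's $M$ is bounded exactly by $L/n$ plus your geometric tail. Since $T$ is fixed here, neither has a technical advantage; your version avoids the somewhat arbitrary choice of $L$, while the paper's $\mathcal I_s$ grouping is what extends cleanly to the quantitative Proposition~\ref{prop-poisson}, where $T$ grows with $n$ and one must track the combinatorial factors. Your closing remark about an ``early phase $k=O(\log n)$'' is unnecessary: by your own first observation the initial gap $k_1-0$ is already $\geq g$, so the first factor is covered by the same bound as all the others.
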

\begin{remark}\label{rem-girth-req}
The statement of Proposition \ref{prop-poisson-fixed} holds (with
the same proof) even if we replace the requirement on the girth of
$G$ with the weaker assumption, that $v_1,\ldots,v_r$ are not
contained in a closed nontrivial walk of length smaller than $g$. In this case, the
presence of other possibly short cycles in $G$ has no effect on the
limiting distribution of $(X_1,\ldots,X_r)$.
\end{remark}
\begin{remark} The parameters $\lambda$ and $d$ in Theorem \ref{thm-count-fixed}
and in Proposition \ref{prop-poisson-fixed} need not be fixed, as long
as the spectral gap, $d-\lambda$, is $\omega(d/g)$ (where
$g=\omega(1)$ was the lower bound on the girth of $G$). For the sake
of simplicity, we prove the case of fixed $d,\lambda$, and later
describe the required adjustments for the general case.
\end{remark}

\subsection{Stronger results for high-girth expanders}
Letting $N_t$ continue to denote the number of vertices visited $t$
times by a non-backtracking random walk of length $n$ on $G$, as in
\eqref{eq-Nt-def}, consider the case where $t$ is no longer fixed.
In case we want to extend the result of Theorem
\ref{thm-count-fixed} for values of $t$ which depend on $n$, and
approximate $N_t$ by $n/(\mathrm{e}t!)$ uniformly over all $t$, a
behavior analogous to the balls and bins model, we need to assume a
larger girth. As noted in \cite{ABLS}, there are $d$-regular
expander graphs with girth $g$, where a typical non-backtracking
random walk visits some vertex $\Omega(\log n / g)$ times.
Therefore, the girth should be at least $\Omega(\log\log n)$ to
allow the number of visits to exhibit a Poisson distribution for all
$t$.

Indeed, an $\Omega(\log\log n)$ girth suffices in order to
approximate the above number of visits uniformly over all values of
$t$ up to the asymptotically maximal number of visits:

\begin{theorem}\label{thm-count}
Let $G$ be as in Theorem \ref{thm-count-fixed}. If $g > 10
\log_{d-1}\log n$ then for every fixed $\delta > 0$, the following
holds with high probability:
\begin{equation}\begin{array}{cl}
\left|\displaystyle{\frac{N_t}{n/(\mathrm{e}t!)}} - 1\right| \leq
\frac{1}{\log\log\log n}& \mbox{ for all }t < F(1-\delta)~, \\
\noalign{\medskip}
 N_t = 0& \mbox{ for all }t  > F(1+\delta)~,
 \end{array}
 \end{equation}
 where $\{N_t\}$ are the variables defined in \eqref{eq-Nt-def} and $F(x) = \left(1 +
x\frac{\log\log\log n}{\log\log n}\right)\frac{\log n}{\log\log n}$.
\end{theorem}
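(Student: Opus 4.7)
The argument splits by the size of $t$: a Markov-type bound shows $N_t=0$ with high probability for $t>F(1+\delta)$, and a second-moment (Chebyshev) argument pins $N_t$ near $n/(\mathrm{e}t!)$ for $t<F(1-\delta)$. Both steps rest on a \emph{quantitative} strengthening of Proposition~\ref{prop-poisson-fixed}: for vertices $v_1,\dots,v_r$ at pairwise distance at least $\ell$, the joint factorial moment $\mathbb{E}\prod_i(X_i)_{k_i}$ should equal $\prod_i\mu^{k_i}(1+\epsilon)$, with an explicit bound on $\epsilon$ in terms of $\ell$ and $k=\sum k_i$. This estimate should follow the excursion-decomposition already used in Proposition~\ref{prop-poisson-fixed}: the spectral gap controls the conditional hitting probability of a far-away vertex between consecutive visits, while the distance $\ell$ together with the girth excludes nontrivial closed walks that would couple different vertices or different excursions.

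\textbf{Upper tail.} For $t>F(1+\delta)$, Markov's inequality gives
\[
\Pr[N_t\geq 1]\,\leq\,\mathbb{E}[N_t]\,\leq\, n\,\Pr[X_v\geq t]\,\leq\, n\,\mathbb{E}[(X_v)_t]/t!\,,
\]
and the quantitative moment estimate with $r=1$ yields $\mathbb{E}[(X_v)_t]=1+o(1)$. A direct Stirling computation on $n/t!$ with $t>F(1+\delta)$ gives $\exp(-\Omega(\log n\cdot\log\log\log n/\log\log n))=o(1)$, as required.

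\textbf{Concentration.} The first moment is obtained by Bonferroni inversion,
\[
\Pr[X_v=t]\,=\,\sum_{k\geq t}(-1)^{k-t}\binom{k}{t}\mathbb{E}[(X_v)_k]/k!\,,
\]
truncated at some $K=K(n)$ in the range $[t,F(1+\delta)]$. Substituting $\mathbb{E}[(X_v)_k]=1+o(1)$ and summing the resulting alternating series yields $\Pr[X_v=t]=(1+o(1))/(\mathrm{e}t!)$, uniformly in $t<F(1-\delta)$, whence $\mathbb{E}[N_t]=(1+o(1))n/(\mathrm{e}t!)$. For the variance, split pairs $u\neq v$ by distance: pairs at distance $\geq\ell$ (say $\ell=g/3$) are handled by the $r=2$ joint Bonferroni expansion, giving $\Pr[X_u=t,X_v=t]=(1+o(1))/(\mathrm{e}t!)^2$ and hence contributing $o((\log\log\log n)^{-2})\mathbb{E}[N_t]^2$; close pairs number at most $n(d-1)^\ell=n(\log n)^{O(1)}$, each contributing at most $2/(\mathrm{e}t!)$, and since $\mathbb{E}[N_t]\geq n^\delta$ for $t<F(1-\delta)$ these are negligible. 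Chebyshev then delivers the $1/\log\log\log n$ relative deviation bound.

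\textbf{Main obstacle.} The crux is keeping the quantitative factorial moment estimate valid simultaneously up to order $k\sim\log n/\log\log n$ (so the Bonferroni series can be truncated late enough for $t$ close to the maximum) and for pairs at distance as small as $O(\log\log n)$ (so ``close'' pairs are combinatorially few). Balancing the polynomial-in-$n$ saving from the spectral contribution $(\lambda/d)^\ell$ against the factor $\binom{k}{t}/k!$ appearing in the truncated Bonferroni sum, while simultaneously accounting for the self-intersecting excursions that the girth assumption must rule out, is the delicate point, and is ultimately what forces the constant $10$ in the hypothesis $g>10\log_{d-1}\log n$.
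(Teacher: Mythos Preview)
Your plan is essentially the paper's own proof: the paper establishes the quantitative factorial-moment estimate you describe (this is the content of Section~\ref{sec::approx}), packages the Bonferroni inversion as a standalone multivariate Brun sieve with explicit error bounds (Proposition~\ref{prop-multi-brun}), and then runs exactly your second-moment/first-moment dichotomy, splitting pairs by distance for the variance. One correction: your claim $\mathbb{E}[N_t]\ge n^{\delta}$ for $t<F(1-\delta)$ is false---a Stirling computation gives only $\mathbb{E}[N_t]\ge\exp\bigl(\Theta(\tfrac{\log n\,\log\log\log n}{\log\log n})\bigr)$ near the top of that range (cf.\ \eqref{eq-poisson-exp})---but since this still dominates any $(\log n)^{O(1)}$, your close-pair bound and the Chebyshev step go through unchanged.
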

The above theorem is also related to the notion of conserving random
bits, which was mentioned earlier (where the goal was to simulate a
distribution which had an exponentially small probability of
avoiding a set, using a linear number of random bits). Indeed, using
$\Theta(n)$ random bits, it is possible to simulate a distribution
which resembles the resulting distribution of throwing $n$ balls to
$n$ bins uniformly and independently, as opposed to the naive
approach, which requires $n \log n$ random bits.

Theorem \ref{thm-count} also immediately gives the result of
\cite{ABLS} regarding the maximal number of visits that a
non-backtracking random walk makes to a single vertex, with an
improved threshold window, replacing the $o\left(\frac{\log
n}{\log\log n}\right)$ error term by $o\left(\frac{(\log
n)(\log\log\log n)}{(\log\log n)^2}\right)$:
\begin{corollary}\label{cor-max} For any fixed $d \geq 3$ and fixed $\lambda < d$
the following holds: if $G$ is an $(n,d,\lambda)$ graph whose girth
is larger than $10\log_{d-1}\log n$, then the maximal number of
visits to a single vertex made by a non-backtracking random walk of
length $n$ on $G$ is with high probability $$
\left(1+(1+o(1))\frac{\log\log\log n}{\log \log n}\right)\frac{\log
n}{\log\log n} ~,$$ where the $o(1)$-term tends to $0$ as
$n\to\infty$.
\end{corollary}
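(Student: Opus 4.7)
The plan is to deduce Corollary \ref{cor-max} directly from Theorem \ref{thm-count}, by combining the two assertions and translating the local estimate $N_t\approx n/(\mathrm{e}t!)$ into the location of the extremal support via Stirling's formula. Let $M_n$ denote the maximum number of visits that $\widetilde{W}$ makes to a vertex of $G$. It suffices to show that, for every fixed $\epsilon>0$, with high probability
\[
F(1-\epsilon)-1\;\le\; M_n \;\le\; F(1+\epsilon);
\]
letting $\epsilon\to 0$ along a countable sequence then yields $M_n=F(1+o(1))$, which is the claim.

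The upper bound is immediate: applying the second assertion of Theorem \ref{thm-count} with $\delta=\epsilon$ gives $N_t=0$ for every $t>F(1+\epsilon)$ with high probability, and hence $M_n\le F(1+\epsilon)$.

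For the lower bound I would set $t^{*}=\lfloor F(1-\epsilon)\rfloor$, which is strictly less than $F(1-\epsilon/2)$ for $n$ large, and apply the first assertion of Theorem \ref{thm-count} with $\delta=\epsilon/2$ to obtain $N_{t^{*}}\ge(1-1/\log\log\log n)\,n/(\mathrm{e}\,t^{*}!)$ with high probability. The task then reduces to showing $n/t^{*}!\to\infty$, which will force $N_{t^{*}}\ge 1$ and thus $M_n\ge t^{*}$. Writing $t^{*}=(1+x\log\log\log n/\log\log n)\log n/\log\log n$ with $x=1-\epsilon+o(1)$, Stirling's formula $\log(t!)=t\log t-t+O(\log t)$ yields $\log t^{*}=\log\log n-\log\log\log n+o(1)$, and a short calculation then gives
\[
\log(t^{*}!)\;=\;\log n\;+\;(x-1)\,\frac{\log n\cdot\log\log\log n}{\log\log n}\;+\;o\!\left(\frac{\log n\cdot\log\log\log n}{\log\log n}\right).
\]
Substituting $x=1-\epsilon+o(1)$ yields $n/t^{*}!=n^{(\epsilon+o(1))\log\log\log n/\log\log n}\to\infty$, completing the lower bound.

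Combining both estimates, for each fixed $\epsilon>0$ we obtain $F(1-\epsilon)-1\le M_n\le F(1+\epsilon)$ with high probability; since the additive $-1$ is negligible next to $F(1)-F(1-\epsilon)=\epsilon\log n\log\log\log n/(\log\log n)^{2}\to\infty$, sending $\epsilon\to 0$ produces $M_n=(1+(1+o(1))\log\log\log n/\log\log n)\log n/\log\log n$, as asserted. I do not anticipate any real obstacle here; the only nontrivial step is the Stirling bookkeeping that pins the transition between the \emph{populated} and \emph{empty} regimes of $N_t$ to the prescribed $F(\cdot)$, and the rest is a direct application of Theorem \ref{thm-count}.
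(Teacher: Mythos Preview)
Your proposal is correct and follows exactly the route the paper intends: the paper does not give a separate proof of Corollary~\ref{cor-max} but states that Theorem~\ref{thm-count} ``immediately gives'' it, and your argument supplies precisely those details. The Stirling computation you carry out is in fact already present in the paper as inequality~\eqref{eq-poisson-exp} within the proof of Theorem~\ref{thm-count}, so nothing here goes beyond what the paper takes for granted.
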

The proof of Theorem \ref{thm-count} follows from a result analogous
to Proposition \ref{prop-poisson-fixed}, however, since we require
an estimate on the rate of convergence to Poisson variables, we
require an extended form of a multivariate Brun's Sieve (Proposition
\ref{prop-multi-brun}). The proof may be applied to a more general
setting, where the set of visited vertices has size depending on
$n$, and the walk is of length $\omega(n)$. However, for the sake of
simplicity, we work in the setting of Theorem \ref{thm-count}, that
is, a fixed set of vertices and a walk of length $\Theta(n)$, as
stated in the following proposition:
\begin{proposition}
  \label{prop-poisson}
  Let $G$ be a graph as in Theorem \ref{thm-count-fixed}. For some
fixed $r$ and $\mu > 0$, let $v_1,\ldots,v_r$ denote vertices of $G$
whose pairwise distances are at least $g$. Let $\widetilde{W}$ be a
non-backtracking random walk of length $m=\mu n$ on $G$ starting
from $v_1$, and $X_i$ be the number of visits that $\widetilde{W}$
makes to $v_i$. If $g \geq c\log_{d-1}\log
  n$ for some fixed $c > 6$ then
$$\bigg|\frac{\Pr[\bigcap_{i=1}^r X_i = t_i]}{\prod_{i=1}^r
\Pr[Z = t_i]} - 1\bigg| \leq O\left((\log n)^{\frac{6-c}{4}}\right)
\mbox{ for all }t_1,\ldots,t_r \in \{0,1,\ldots,\lfloor\log
n\rfloor\}~,$$ where $Z \sim \Po(\mu)$ and the $o(1)$-term tends to
$0$ as $n\to\infty$.
\end{proposition}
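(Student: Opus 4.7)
The plan is to estimate the joint descending factorial moments
$$
S_{\mathbf{s}} := \mathbb{E}\Bigl[\prod_{i=1}^{r}(X_i)_{s_i}\Bigr]
$$
with enough precision to feed into the quantitative multivariate Brun's sieve (Proposition~\ref{prop-multi-brun}), which will then convert these estimates into the claimed uniform bound on $\Pr[\bigcap_i X_i=t_i]/\prod_i \Pr[Z=t_i]$. Writing $X_i=\sum_{a=1}^{m}\mathbf 1\{w_a=v_i\}$, the moment $S_{\mathbf{s}}$ is the sum over ordered tuples $(a_{i,k})_{i,k}$ of pairwise distinct times (within each fixed $i$) of the joint hitting probability $\Pr[w_{a_{i,k}}=v_i\text{ for all }i,k]$. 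Sorting each such tuple chronologically into $0<b_1<\cdots<b_S\le m$ with $S=\sum_i s_i$, and labelling each $b_\ell$ by the vertex $v_{j(\ell)}$ it represents, the girth and pairwise-distance hypotheses together force every consecutive gap $b_{\ell+1}-b_\ell$ to be at least $g$: any closed non-backtracking walk among $v_1,\ldots,v_r$ of length below $g$ would produce a cycle shorter than the girth.

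I would split the outer sum into a \emph{main} contribution — tuples whose gaps are all at least the non-backtracking mixing scale $T=\Theta(\log n)$ — and a \emph{remainder} contribution — tuples with at least one gap in $[g,T)$. After conditioning on the incoming directed edge at each visit time, each hitting probability factors as a product of $S$ one-step transition probabilities $p_{b_\ell-b_{\ell-1}}(\,\cdot\,,v_{j(\ell)})$, and the non-backtracking spectral bound of~\cite{ABLS} controls each factor. On the main side this bound gives $(1/n)(1+O(n^{-\epsilon}))$ per factor for some fixed $\epsilon>0$, so the product telescopes to $(1/n)^S(1+o(1))$, and summing against the $\approx\prod_i m^{s_i}$ ordered tuples yields the principal term $\prod_i\mu^{s_i}(1+o(1))$. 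On the remainder side the same spectral estimate, paired with the tree structure of the radius-$g/2$ ball about each $v_i$, gives the pointwise bound $p_k\le O((\log n)^{-c/2})$ for $g\le k<T$; combining with the $O(S^2 T/m)$ fraction of tuples containing such a short gap bounds the remainder negligibly relative to the principal term.

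Plugging the resulting estimate $S_{\mathbf{s}}=\prod_i\mu^{s_i}\bigl(1+O(S(\log n)^{-c/2})\bigr)$ into Proposition~\ref{prop-multi-brun}, which performs a truncated inclusion--exclusion and trades the length of the truncation against the size of the factorial-moment error, yields the claimed rate. The principal obstacle is precisely this final quantitative bookkeeping: because $\sum_i s_i$ may range up to $r\log n$, the compounded factorial-moment error must be balanced against the Brun truncation error $O(\mu^K/K!)$ at level $K$, and it is the optimisation of $K$ against $c$ and $\log n$ that produces the exponent $(6-c)/4$ in the final bound. A secondary delicate point is establishing the pointwise non-backtracking transition bound for $k\in[g,T)$, where the spectral estimate alone is insufficient and the tree-like local structure from the girth assumption is needed to rule out anomalously concentrated return probabilities on the girth-length scale.
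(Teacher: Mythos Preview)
Your approach is essentially identical to the paper's: compute the joint factorial moments by sorting the visit times, split according to whether all consecutive gaps exceed a mixing scale $\tau=\Theta(\log n)$, use the tree structure of the radius-$\lfloor(g-1)/2\rfloor$ ball to bound short-gap transition probabilities by $(d-1)^{-\lfloor(g-1)/2\rfloor}=O((\log n)^{-c/2})$, and feed the resulting moment estimate into Proposition~\ref{prop-multi-brun}.

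Two small bookkeeping points are off. First, the relative error in the factorial moments is $O(S^2(\log n)^{1-c/2})=O((\log n)^{3-c/2})$, not $O(S(\log n)^{-c/2})$: summing the short-gap length over $[g,\tau)$ costs an extra factor $\tau\sim\log n$, and the tuple count contributes $S^2$ rather than $S$ (this is exactly the paper's $\xi(1)/\xi(0)=O(t^2 M)$ with $M=O((\log n)^{1-c/2})$). Second, the exponent $(6-c)/4$ is not produced by optimising the Brun truncation level against the moment error; in Proposition~\ref{prop-multi-brun} one simply takes $s=T=\lfloor\log n\rfloor$, which makes $2\mu^s/s!$ negligibly small compared to $\epsilon=(\log n)^{3-c/2}$, and the output is then $\epsilon'\approx\sqrt{\epsilon}=(\log n)^{(6-c)/4}$.
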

\begin{remark} As in the case of Proposition \ref{prop-poisson-fixed} (see Remark \ref{rem-girth-req}), the
requirement on the girth in Proposition \ref{prop-poisson} may be
replaced with the assumption that $\{v_1,\ldots,v_r\}$ are not
contained in a closed nontrivial walk of length smaller than $g$.
\end{remark}

\subsection{Organization}
The rest of the paper is organized as follows: in Section
\ref{sec::const} we prove Theorem \ref{thm-count-fixed} and
Proposition \ref{prop-poisson-fixed} concerning expanders with a
non-fixed girth. In Section \ref{sec::brun} we formulate and prove
the multivariate version of Brun's Sieve which specifies the rate of
convergence to the limiting distribution. This version is
subsequently used in Section \ref{sec::high-girth} to prove the
above mentioned Theorem \ref{thm-count} and Proposition
\ref{prop-poisson}. The final section, Section
\ref{sec::conclusion}, is devoted to concluding remarks and some
open problems.

\section{A Poisson approximation for expanders with a non-fixed girth}\label{sec::const}
\subsection{Proof of Proposition \ref{prop-poisson-fixed}}
For the simpler goal of proving Poisson convergence without
estimating its rate, we will need the following known results. The
well known univariate version of Brun's Sieve states the following:
\begin{theorem}[Brun's Sieve]\label{thm-simple-brun} Let $X=X(n)$ be a sum of indicator
variables, and let $\mu > 0$. If for every $r$,
$\lim_{n\to\infty}\mathbb{E}\binom{X}{r} = \mu^r / r!$, then
$X\stackrel{d}{\to} Z$ as $n\to\infty$, where $Z \sim \Po(\mu)$.
\end{theorem}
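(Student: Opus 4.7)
The plan is to deduce Poisson convergence from convergence of factorial moments via the classical Bonferroni inequalities. Writing $X=\sum_{i\in I}\mathbf{1}_{A_i}$, the factorial moment $\mathbb{E}\binom{X}{r}$ equals $\sum_{S\subseteq I,\,|S|=r}\Pr[\bigcap_{i\in S}A_i]$, which is exactly the $r$-th symmetric function appearing in the inclusion--exclusion expansion of $\Pr[X=k]$. For sums of indicators one has the two-sided Bonferroni inequality: for every $k$ and every nonnegative integer $J$,
$$\sum_{j=0}^{2J-1}(-1)^j\binom{k+j}{k}\mathbb{E}\binom{X}{k+j}\;\leq\;\Pr[X=k]\;\leq\;\sum_{j=0}^{2J}(-1)^j\binom{k+j}{k}\mathbb{E}\binom{X}{k+j}.$$
This is the step where the indicator-sum structure of $X$ is essential; the same conclusion would not follow from identical factorial moments for an arbitrary $\mathbb{N}_0$-valued random variable.

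Granting the Bonferroni bounds, fix $k$ and $J$ and let $n\to\infty$. By hypothesis each $\mathbb{E}\binom{X}{k+j}$ tends to $\mu^{k+j}/(k+j)!$, so both the upper and the lower bound converge, respectively, to the odd and even partial sums of
$$\sum_{j\geq 0}(-1)^j\binom{k+j}{k}\frac{\mu^{k+j}}{(k+j)!}\;=\;\frac{\mu^k}{k!}\sum_{j\geq 0}\frac{(-\mu)^j}{j!}\;=\;\frac{\mu^k e^{-\mu}}{k!}\;=\;\Pr[Z=k].$$
Next let $J\to\infty$: since the series on the right is absolutely convergent, both its even and odd truncations tend to $\Pr[Z=k]$, which forces $\limsup_n\Pr[X=k]\leq\Pr[Z=k]\leq\liminf_n\Pr[X=k]$ and hence $\Pr[X=k]\to\Pr[Z=k]$ for every fixed $k$. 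Since the state space is the discrete set of nonnegative integers and the pointwise limit is itself a probability mass function, this is equivalent to $X\stackrel{d}{\to}Z$, by Scheff\'e's lemma for instance.

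The substantive step is the two-sided Bonferroni inequality itself, which is a combinatorial identity for sums of indicators: one groups the terms of the inclusion--exclusion expansion of $\Pr[X=k]$ by how many ``extra'' events beyond a fixed $k$-subfamily occur, and checks that truncating the resulting alternating sum after an odd or an even block produces an inequality in the appropriate direction. No quantitative estimate is required, which is why the qualitative Theorem \ref{thm-simple-brun} suffices to deduce Proposition \ref{prop-poisson-fixed}; the stronger multivariate and quantitative analogue needed for Proposition \ref{prop-poisson} will instead demand the more careful bookkeeping developed in Section \ref{sec::brun}.
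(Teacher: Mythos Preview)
Your proof is correct and follows precisely the route the paper indicates: the paper does not spell out a proof of Theorem~\ref{thm-simple-brun} but refers to \cite{ProbMethod} for its derivation ``from the Inclusion-Exclusion Principle and the Bonferroni inequalities,'' and that is exactly what you carry out---the Jordan formula $\Pr[X=k]=\sum_{j\geq 0}(-1)^j\binom{k+j}{k}\mathbb{E}\binom{X}{k+j}$ together with the alternating Bonferroni truncation bounds, followed by passing to the limit first in $n$ and then in the truncation level. There is nothing to add.
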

See, e.g., \cite{ProbMethod} (pp. 119-122) for the derivation of
this result from the Inclusion-Exclusion Principle and the
Bonferroni inequalities \cite{Bonferroni}, as well as for several
applications. A multivariate version of Brun's Sieve is stated
in \cite{JLR}, and a proof of the multivariate version by
induction (using Brun's Sieve once for the base of the induction,
and once more for the induction step) appears in \cite{Wormald}:
\begin{theorem}[Multivariate Brun's Sieve]\label{thm-simple-multibrun}
Let $X_1=X(n),\ldots,X_r=X_r(n)$ denote sums of indicator variables,
and let $\mu_1,\ldots,\mu_r > 0$. If for every $t_1,\ldots,t_r$,
$\lim_{n\to\infty}\mathbb{E}[\prod_{i=1}^r\binom{X_i}{t_i}] =
\prod_{i=1}^r \mu_i^{t_i} / {t_i}! $, then $(X_1,\ldots,X_r)
\stackrel{d}{\to} (Z_1,\ldots,Z_r)$, where the $Z_i$-s are
independent Poisson variables, $Z_i \sim \Po(\mu_i)$.
\end{theorem}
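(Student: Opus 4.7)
The plan is to reduce convergence in distribution to pointwise convergence of the joint probability mass function, then extract the latter from the hypothesis on joint factorial moments via a multivariate analogue of the inclusion-exclusion/Bonferroni argument that proves Theorem~\ref{thm-simple-brun}. Since $(Z_1,\ldots,Z_r)$ is integer-valued with a discrete distribution, it suffices to show that for every fixed $(t_1,\ldots,t_r)\in\mathbb{N}_0^r$,
\[
\Pr[X_1=t_1,\ldots,X_r=t_r]\;\longrightarrow\;\prod_{i=1}^r e^{-\mu_i}\frac{\mu_i^{t_i}}{t_i!}.
\]

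The starting point is the combinatorial identity
\[
\mathbf{1}\{X_i=t_i\}\;=\;\sum_{k_i\ge t_i}(-1)^{k_i-t_i}\binom{k_i}{t_i}\binom{X_i}{k_i},
\]
a finite sum for each realization since $\binom{X_i}{k_i}=0$ when $k_i>X_i$. Multiplying these identities over $i$ and taking expectations produces the multivariate inclusion-exclusion formula
\[
\Pr\Big[\bigcap_{i=1}^r X_i=t_i\Big]\;=\;\sum_{k_1\ge t_1,\ldots,k_r\ge t_r}\prod_{i=1}^r(-1)^{k_i-t_i}\binom{k_i}{t_i}\cdot\mathbb{E}\prod_{i=1}^r\binom{X_i}{k_i}.
\]
I would then establish a multivariate Bonferroni sandwich: for each $M\ge 0$, truncating each inner sum at $k_i\le t_i+M$ yields finite sums which, depending on the parity of $M$, upper- or lower-bound $\Pr[\bigcap_i X_i=t_i]$. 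Inside each such truncated sum only finitely many joint factorial moments appear, so the hypothesis of the theorem permits passage to the limit $n\to\infty$ term by term, replacing each $\mathbb{E}\prod_i\binom{X_i}{k_i}$ by $\prod_i \mu_i^{k_i}/k_i!$. The resulting finite expression factors across the coordinates into $\prod_i\sum_{k_i=t_i}^{t_i+M}(-1)^{k_i-t_i}\binom{k_i}{t_i}\mu_i^{k_i}/k_i!$, which is a partial sum of the Taylor expansion of $e^{-\mu_i}\mu_i^{t_i}/t_i!$. Sending $M\to\infty$ closes the sandwich and produces the claimed joint pmf.

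The main obstacle is rigorously justifying the multivariate Bonferroni sandwich, since a naive coordinate-wise product of the univariate Bonferroni inequalities does not preserve the direction of the inequality (the truncated partial sums on both sides need not be nonnegative). The cleanest remedy, following the inductive argument attributed to Wormald in the paper, is induction on $r$: the case $r=1$ is precisely the univariate Brun's sieve (Theorem~\ref{thm-simple-brun}), and for the inductive step one fixes $t_r$, applies the univariate Bonferroni bound in expectation to sandwich $\mathbf{1}\{X_r=t_r\}$ between two nonnegative truncations of its inclusion-exclusion expansion, and combines the result with the inductive hypothesis applied to the joint factorial-moment expressions for $(X_1,\ldots,X_{r-1})$ that appear inside those truncations. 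This two-stage invocation of Brun's sieve bypasses any direct multivariate Bonferroni combinatorics and delivers exactly the sandwich needed to close the argument above.
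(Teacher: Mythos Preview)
Your proposal is correct, and the inductive approach you settle on at the end is exactly what the paper attributes to Wormald: the paper does not prove Theorem~\ref{thm-simple-multibrun} itself but merely cites the inductive argument (``using Brun's Sieve once for the base of the induction, and once more for the induction step''), which is precisely your final paragraph.

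One remark is worth making. You correctly identify that a naive coordinate-wise product of univariate Bonferroni truncations does not yield a valid two-sided sandwich, and you use this as your motivation to retreat to the inductive argument. However, the paper's own treatment of the quantitative version (Proposition~\ref{prop-multi-brun}) shows that your first instinct can in fact be carried through directly: there is a genuine multivariate Bonferroni inequality, due to Meyer (Theorem~\ref{thm-gen-bon} in the paper), in which the truncation is by the \emph{total} degree $\sum_j i_j$ rather than coordinate-wise, and this does give alternating upper and lower bounds on $\Pr[\cap_i X_i = m_i]$. So the ``main obstacle'' you name is real for the naive product, but it has a known resolution that the paper exploits to obtain explicit rates. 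The inductive route you take is cleaner for the bare convergence statement, while the Meyer-based route is what the paper needs (and uses) when error terms must be tracked.
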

While a stronger version of Brun's Sieve is proved in Section
\ref{sec::brun} (Proposition \ref{prop-multi-brun}), Theorem
\ref{thm-simple-multibrun} suffices for the proof of Proposition
\ref{prop-poisson-fixed}, where the rate of convergence to the
Poisson distribution is not specified. Indeed, letting
$X_1,\ldots,X_r$ and $\mu$ be as in Proposition
\ref{prop-poisson-fixed}, we need to prove that
\begin{equation}
  \label{eq-simple-brun-req}
\lim_{n\to\infty}\mathbb{E}[\prod_{i=1}^r\binom{X_i}{t_i}] =
\prod_{i=1}^r \frac{\mu^{t_i}}{{t_i}!} ~\mbox{ for all
}t_1,\ldots,t_r~.
\end{equation}
Fix integers $t_1,\ldots,t_r$, and set $t = \sum_{i=1}^r t_i$. Let
$(v_1,w_1,\ldots,w_m)$ denote the path of the non-backtracking
random walk $\widetilde{W}$, and for all $i\in[r]$ and $j\in[m]$ let
$X_{i j}$ denote the indicator for the event that $\widetilde{W}$
visits $v_i$ in position $j$; that is, $X_{i
j}=\mathbf{1}_{\{w_j=v_i\}}$, and by definition, $X_i = \sum_{j=1}^m
X_{i j}$. It follows that:
\begin{equation}
  \label{eq-factorial-moment-def}
  \mathbb{E}\Big[\prod_{i=1}^{r}\binom{X_i}{t_i}\Big] = \mathop{\textstyle{\mbox{\Large$\sum'$}}}_{I_1,\ldots,I_r}
  \Pr\Big[\bigcap_{i\in[r]} \bigcap_{j \in I_i}X_{i j}=1\Big]~,
\end{equation}
where $\sum'$ ranges over $I_1,\ldots,I_r \subset [m]$ with $|I_i| =
t_i$. We will rewrite the right-hand-side of the above equation. To
this end, set
\begin{equation}
  \label{eq-L-def}L = (\log n)^2~,
\end{equation} and let $g=\omega(1)$ be a lower bound for the girth of $G$, which
satisfies $g = o(L)$ (such a $g$ exists by the assumption on $G$).
For all $s \in \{0,\ldots,t=\sum t_i\}$, let $\mathcal{I}_s$ denote
the collection of $r$-tuples $(I_1,\ldots,I_r)$ where:
\begin{itemize}
\item $I_1,\ldots,I_r$ are {\em disjoint} subsets of $[m]$ and $|I_j|=t_j$ for all $j$.
\item There are precisely $s$ consecutive elements of $\cup_j I_j \cup \{0\}$ whose distance is less than $L$.
\end{itemize}
 In other words:
\begin{equation}\label{eq-calI-def}
\mathcal{I}_s = \left\{ (I_1,\ldots,I_r) ~:~
\begin{array}{l}\bigcup_j I_j = \{x_1,x_2,\ldots,x_t\} \subset [m],~x_0 = 0,~|I_j|=t_j \mbox{ for all $j$,}\\
x_{i-1}< x_i \mbox{ for all $i$, and } |\{1 \leq i \leq
t:x_i-x_{i-1} < L\}|=s\end{array} \right\}~.\end{equation} Notice
that the events $X_{i j} = 1$ and $X_{i' j} = 1$ are disjoint for $i
\neq i'$. Therefore, \eqref{eq-factorial-moment-def} takes the
following form:
\begin{equation}
  \mathbb{E}\Big[\prod_{i=1}^{r}\binom{X_i}{t_i}\Big] =
  \sum_{s=0}^t \sum_{(I_1,\ldots,I_r)\in \mathcal{I}_s}
\Pr[\bigcap_{i\in[r]} \bigcap_{j\in I_i} X_{i j}=1]~.
\end{equation}
The following claim estimates the probability that a
non-backtracking random walk, starting from some given $v_i$, would
end up in some given $v_j$ after less than $L$ steps, as well as
after some given $k \geq L$ number of steps. Here and in what
follows, the notation $\widetilde{P}_{u v}^{(k)}$ denotes the
probability that a non-backtracking random walk of length $k$, which
starts in $u$, ends in $v$.
\begin{claim}\label{cl-visit-bound} Let $G$ be as above, and define:
$M=\max_{i,j \in [r]}\sum_{k<L} \widetilde{P}_{v_i v_j}^{(k)}$. Then
$M = o(1)$ and $\widetilde{P}_{v_i v_j}^{(k)}=\frac{1+o(1)}{n}$ for
all $k \geq L$ and $i,j\in[r]$, where in both cases the $o(1)$-term
tends to $0$ as $n\to\infty$.
\end{claim}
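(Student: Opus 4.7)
The plan is to split the range of $k$ into the short-time regime $k<g$ and the long-time regime $k\ge g$, and handle each separately.

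For $k<g$, I would simply observe that no non-backtracking walk of length $k$ from $v_i$ to $v_j$ can exist. For $i\neq j$, the pairwise-distance hypothesis rules out any walk (backtracking or not) of length $k<g$ between $v_i$ and $v_j$, so $\widetilde P^{(k)}_{v_iv_j}=0$. For $i=j$, the girth assumption on $G$ (or the weaker hypothesis of Remark~\ref{rem-girth-req} that $v_i$ lies on no closed nontrivial walk of length $<g$) precludes a non-backtracking closed walk at $v_i$ of length $<g$, and again $\widetilde P^{(k)}_{v_iv_i}=0$.

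For $k\ge g$ I would invoke the mixing-rate estimate for non-backtracking random walks on $(n,d,\lambda)$ graphs established in \cite{ABLS}: for fixed $d$ and fixed $\lambda<d$, there is a constant $\rho=\rho(d,\lambda)<1$ and a constant $C$ such that $\bigl|\widetilde P^{(k)}_{uv}-1/n\bigr|\le C\rho^{k}$ uniformly in $u,v\in V(G)$. The pointwise bound follows from the $L_2$ convergence proved there, since the maximum entry of a probability-minus-stationary vector is bounded by its $L_2$ norm, and passing from the natural directed-edge state space of the non-backtracking chain to the vertex-to-vertex probabilities only alters the constant by a bounded factor. With this in hand, for $k\ge L=(\log n)^2$ the error is $C\rho^{(\log n)^2}=n^{-\omega(1)}$, giving $\widetilde P^{(k)}_{v_iv_j}=(1+o(1))/n$ and settling the second assertion. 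For $M$, combining the two regimes,
\[
\sum_{k<L}\widetilde P^{(k)}_{v_iv_j}\;=\;\sum_{g\le k<L}\widetilde P^{(k)}_{v_iv_j}\;\le\;\sum_{g\le k<L}\Bigl(\tfrac{1}{n}+C\rho^{k}\Bigr)\;\le\;\frac{L}{n}+\frac{C\rho^{g}}{1-\rho}\;=\;o(1),
\]
since $L/n=(\log n)^2/n=o(1)$ and $\rho^g=o(1)$ because $g=\omega(1)$. Taking the maximum over the finite set of pairs $(i,j)\in[r]\times[r]$ preserves this estimate, yielding $M=o(1)$.

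The heavy lifting is done by the quantitative mixing-rate bound of \cite{ABLS}; once that is in hand the rest is a short deterministic computation using the girth to kill the early terms and the spectral gap to control the rest. The only care point is checking that the $L_2$ mixing bound formulated on directed edges can be transferred to a pointwise bound on the vertex-to-vertex probabilities $\widetilde P^{(k)}_{uv}$, which is routine. Note also that the argument uses only that $\rho<1$ is bounded away from $1$ and $g=\omega(1)$, so (as indicated in Remark~\ref{rem-girth-req}) the same proof will go through in the more general setup where $d$ and $\lambda$ are permitted to depend on $n$ provided $d-\lambda=\omega(d/g)$.
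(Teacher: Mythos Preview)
Your proposal is correct and follows essentially the same approach as the paper: vanish the terms with $k<g$ via the girth/distance hypotheses, use the mixing bound $\bigl|\widetilde P^{(k)}_{uv}-1/n\bigr|\le (1+o(1))\rho^k$ from \cite{ABLS} to handle both the $k\ge L$ assertion and the tail sum $\sum_{g\le k<L}\widetilde P^{(k)}_{v_iv_j}\le L/n + O(\rho^g)=o(1)$. The paper simply quotes the pointwise bound \eqref{eq-rho-convergence} directly from \cite{ABLS} rather than discussing the $L_2$-to-pointwise transfer, but otherwise the arguments coincide.
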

\begin{proof}
We need a few results on the mixing of non-backtracking random
walks, proved in \cite{ABLS}. The {\em mixing-rate} of a
non-backtracking random walk on $G$ is defined as:
\begin{equation}\label{eq-rho-def}\rho(G) = \limsup_{k\to\infty} \max_{u,v\in V}
\big|\widetilde{P}_{u v}^{(k)} -
\frac{1}{n}\big|^{1/k}~.\end{equation} Theorem 1.1 of \cite{ABLS}
determines the value of $\rho$ as a function of $\lambda$ and $d$:
\begin{equation}\label{eq-rho-val}  \rho=
\frac{\psi\left(\frac{\lambda}{2\sqrt{d-1}}\right)}{\sqrt{d-1}}~,\mbox{
where } \psi(x) = \left\{\begin{array}
  {ll}x+\sqrt{x^2-1} & \mbox{If }~x \geq 1~,\\
1 & \mbox{If }~0 \leq x \leq 1~.
\end{array}\right.\end{equation}
As shown in \cite{ABLS}, one can verify that $\rho \leq
\max\{\frac{\lambda}{d},\frac{1}{\sqrt{d-1}} \}$, and in our case,
as $\lambda$ and $d$ are both fixed, so is $0 < \rho < 1$.
Furthermore, by the proof of the above theorem,
\begin{equation}\label{eq-rho-convergence} \max_{u v} \big|\widetilde{P}_{u v}^{(k)} - \frac{1}{n}\big|
\leq (1+o(1))\rho^k ~,\end{equation} where the $o(1)$-term tends to
$0$ as $k\to\infty$, and is independent of $n$. In particular, by
the choice of $L$ to be $(\log n)^2$, for every sufficiently large
$n$ we have
\begin{equation}\label{eq-L-mixing} \widetilde{P}_{u v}^{(k)} =
\frac{1+o(1)}{n}~\mbox{ for all $k\geq L$ and all $u,v$.}
\end{equation}
Take some $i,j\in[r]$ (not necessarily distinct). By the assumption
on the pairwise distances of $v_1,\ldots,v_r$ and the girth of $G$,
we have $\widetilde{P}_{v_i v_j}^{(k)}=0$ for all $k < g$. On the
other hand, \eqref{eq-rho-convergence} and the fact that
$g=\omega(1)$ imply that $\widetilde{P}^{(k)}_{u v} \leq \frac{1}{n}
+ (1+o(1))\rho^k$ for all $k \geq g$, giving the upper bound:
$$\sum_{k=1}^{L-1}\widetilde{P}_{v_i v_j}^{(k)} =
\sum_{k=g}^{L-1} \widetilde{P}^{(k)}_{v_i v_j} \leq \frac{L-g}{n} +
\frac{(1+o(1))\rho^g}{1-\rho} ~.$$ The required result now follows
from the fact that $L=o(n)$, $\rho$ is fixed and $g=\omega(1)$.
\end{proof}
For convenience, when examining some element $(I_1,\ldots,I_r)\in
\mathcal{I}_s$, we use the following notation:
denote by $i_1,\ldots,i_s\in[m]$ the $s$ indices of the $x_i$-s
which satisfy $|x_i-x_{i-1}|<L$, as in \eqref{eq-calI-def},
the definition of $\mathcal{I}_s$. In addition, for every $i\in[m]$,
let $v(x_i)$ denote the vertex $v_j$, where $j\in[r]$ is the single
index satisfying $x_i\in I_j$. The following holds:
\begin{align}
& \sum_{(I_1,\ldots,I_r)\in \mathcal{I}_s}
\Pr[\bigcap_{i\in[r]} \bigcap_{j\in I_i} X_{i j}=1] \nonumber \\
& \leq  \binom{t}{t_1,\ldots,t_r}\binom{m}{t-s}\binom{t}{s}
\left(\frac{1+o(1)}{n}\right)^{t-s}
\sum_{k_1=1}^{L-1}\ldots\sum_{k_s=1}^{L-1} \prod_{j=1}^{s} \widetilde{P}^{(k_j)}_{v(x_{i_j-1}) v(x_{i_j})} \nonumber \\
& \leq  \binom{t}{t_r,\ldots,t_r}\binom{m}{t-s}\binom{t}{s}
\left(\frac{1+o(1)}{n}\right)^{t-s} M^s~.
\label{eq-upper-bound-Irs}\end{align} Letting $\xi(s)$ denote the
right hand side of \eqref{eq-upper-bound-Irs}, it follows that for
all $s < t$:
$$ \frac{\xi(s+1)}{\xi(s)} =
\frac{(t-s)^2}{(m-t+s+1)(s+1)}\cdot \frac{ n}{1+o(1)}\cdot M =
\Theta(M)=o(1)~.$$ We deduce that
\begin{align}
\mathbb{E}\Big[\prod_{i=1}^{r}\binom{X_i}{t_i}\Big] & =
  \sum_{s=0}^{t} \sum_{(I_1,\ldots,I_r)\in \mathcal{I}_s}
\Pr[\bigcap_{i\in[r]} \bigcap_{j\in I_i} X_{i j}=1] \nonumber\\
& \leq \sum_{s=0}^{t}\xi(s) \leq
(1+o(1))\xi(0) =(1+o(1)\binom{t}{t_1,\ldots,t_r}\binom{m}{t}\left(\frac{1+o(1)}{n}\right)^t \nonumber \\
 & = (1+o(1))\frac{\mu^t}{\prod_{i=1}^r t_i!}
~.\label{eq-factorial-moment-upper}\end{align} For the other
direction, consider $\mathcal{J}$, the collection of all $r$-tuples
of disjoint subsets of $[m]\setminus [L]$, $(I_1,\ldots,I_r)$, where
$|I_j|=t_j$ and the pairwise distances of the indices all exceed
$L$:
\begin{equation}\label{eq-calJ-def}
\mathcal{J} = \left\{ (I_1,\ldots,I_r)~:~\begin{array}{l}\bigcup_j
I_j = \{x_1,x_2,\ldots,x_t\} \subset \{L+1,\ldots,m\},\\
|I_j|=t_j \mbox{ for all $j$ and }x_{i} > x_{i-1} + L\mbox{ for all
$i$}\end{array} \right\}~.\end{equation} Since $\mathcal{J} \subset
\bigcup_s\mathcal{I}_s$ and $\widetilde{P}_{v_i
v_j}^{(k)}=\frac{1+o(1)}{n}$ for all $k \geq L$ and $i,j\in[r]$
(Claim \ref{cl-visit-bound}), we get: \begin{align}
\mathbb{E}\Big[\prod_{i=1}^{r}\binom{X_i}{t_i}\Big] & \geq
  \sum_{(I_1,\ldots,I_r)\in\mathcal{J}}
  \left(\frac{1-o(1)}{n}\right)^t
  = \binom{t}{t_1,\ldots,t_r} \binom{m-L t}{t}
\left(\frac{1-o(1)}{n}\right)^t \nonumber\\
& = (1+o(1))\frac{\mu^t}{\prod_{i=1}^r
t_i!}~.\label{eq-factorial-moment-lower}\end{align} Inequalities
\eqref{eq-factorial-moment-upper} and
\eqref{eq-factorial-moment-lower} imply that
\eqref{eq-simple-brun-req} holds, completing the proof of
Proposition \ref{prop-poisson-fixed}. \qed

\begin{remark}
The assumption that $G$ is an $(n,d,\lambda)$ graph for some fixed
$d\geq 3$ and fixed $\lambda$ was exploited solely in Claim
\ref{cl-visit-bound}. In fact, the proof holds whenever for some
$L=o(n)$ and $g=\omega(1)$, $g < L$, the girth of $G$ is at least
$g$ and $\rho^g = o(1)$. Suppose that $d \geq 3$ but $\lambda,d$ are
no longer fixed. Recalling that $\rho \leq
\max\{\frac{\lambda}{d},\frac{1}{\sqrt{d-1}} \}$, the requirements
of Proposition \ref{prop-poisson-fixed} may be replaced, for
instance, with $G$ being an $(n,d,\lambda)$ graph of girth larger
than $g=\omega(1)$, where $d-\lambda = \omega(d/g)$.
\end{remark}

\subsection{Proof of Theorem \ref{thm-count-fixed}}
To prove the theorem, we use the estimates given by Proposition
\ref{prop-poisson-fixed} for the cases $r=1,2$, and apply a simple
second moment argument. The assumptions of the theorem imply that
for any two vertices $u,v \in V$, whose distance, as well as their
distance from $w_0$, are all at least $g$, we have:
\begin{align}
&\Pr[X_u = t] = \frac{1}{\mathrm{e}t!} + o(1)~, \label{eq-prob-r=1} \\
 &\Pr[X_u = X_v = t ] = \Pr[X_u = t]^2 +
o(1) \mbox{ for every fixed }t~,\label{eq-prob-r=2}
\end{align}
where the two $o(1)$-terms tend to $0$ as $n\to\infty$. Let $g=g(n)$
be such that the girth of $G$ is at least $g$, and in addition, $g =
o(\log n)$. Let $N_t$ denote the number of vertices which
$\widetilde{W}$ visits precisely $t$ times; we wish to obtain an
estimate on the probability that $N_t =
(1+o(1))n/(\mathrm{e}t!)$. As $t$ is fixed, the effect of any $o(n)$
positions along $\widetilde{W}$ have on this value is negligible,
and we may ignore the set of vertices whose distance from $w_0$ is less
 than $g$. Therefore, let $U$ denote the set of vertices whose distance from $w_0$
 is at least $g$, and define
$$ N'_t = \sum_{u \in U} \mathbf{1}_{\{X_u=t\}}~.$$
Since $ |U| \geq n - d (d-1)^{g-1} = (1-o(1))n$, we have:
\begin{equation}\label{eq-Nt'-Nt-relation} \frac{| N_t - N'_t |}{n/(\mathrm{e}t!)}
= o(1)~,\end{equation} and thus, showing that $N'_t =
(1+o(1))n/(\mathrm{e}t!)$ almost surely will complete the proof. By
\eqref{eq-prob-r=1}, $$\mathbb{E}N'_t =
(1+o(1))\frac{n}{\mathrm{e}t!}~,$$ and denoting by $\delta(u,v)$ the
distance between two vertices $u,v$, we deduce the following from
\eqref{eq-prob-r=2}:
\begin{align}\var(N'_t) & \leq \mathbb{E}N'_t + \sum_{u \in U}\sum_{v \in U}
\Big(\Pr[X_u=X_v=t]-\Pr[X_u=t]\Pr[X_v=t] \Big)\nonumber\\
& \leq  \mathbb{E}N'_t + \bigg(\sum_{u \in U} \mathop{\sum_{v \in
U}}_{\delta(u,v) < g} \Pr[X_u=t]\bigg) +
o(n^2) \nonumber \\
& \leq \left(1 + o(n)\right)\mathbb{E}N'_t + o(n^2)= o(n^2)~.
\end{align}
Chebyshev's inequality now gives:
$$ \Pr \left[\left|N'_t - \frac{n}{\mathrm{e}t!}\right| = \Omega(n)\right]  = O\left(\var(N'_t)/n^2\right) =
o(1)~,$$ completing the proof of the theorem. \qed

\section{Multivariate Brun's Sieve with an estimated rate of
convergence}\label{sec::brun} Recall that the versions of Brun's
Sieve stated in Section \ref{sec::const} (Theorem
\ref{thm-simple-brun} and Theorem \ref{thm-simple-multibrun}) do not
specify the rate of convergence to the Poisson distribution, and
furthermore, the inductive proof of the multivariate case (which
appears in \cite{Wormald}) gives an undesirable extra dependence of
the rate of convergence on the number of variables. We therefore
prove the next version of Brun's Sieve, which follows directly from
a multivariate version of the Bonferroni inequalities:
\begin{proposition}\label{prop-multi-brun}
Let $\mathcal{A}_i = \{A_{ij} : j \in [M_i]\}$, $i\in[r]$, denote
$r$ classes of events, and denote by $X_i =
\sum_{j=1}^{M_i}\mathbf{1}_{A_{ij}}$ the number of events in
$\mathcal{A}_i$ which occur. Suppose that for some integer $T$ and
some choice of $\epsilon,s,\mu_1,\ldots,\mu_r > 0$ satisfying
 $s > \mu$ and $2\frac{\mu^s}{s!} < \epsilon <
(2{r\mathrm{e}^{\mu}})^{-2}$, where $\mu=\max_i|\mu_i|$,
 we have:
\begin{equation}\label{eq-brun-req}
\bigg|\frac{\mathbb{E}[\prod_{i=1}^r\binom{X_i}{t_i}]}{\prod_{i=1}^r
\mu_i^{t_i} / {t_i}!} - 1\bigg| \leq \epsilon \mbox{ for all }
t_1,\ldots,t_r \in \{0,1,\ldots,r (T+2s)\}~.
\end{equation}
Then:
\begin{equation}\label{eq-brun-stat}
\left|\frac{\Pr[\bigcap_{i=1}^r X_i = t_i]}{\prod_{i=1}^r\Pr[Z_i =
t_i]} - 1\right| \leq \epsilon' \mbox{ for all }t_1,\ldots,t_r \in
\{0,\ldots,T\}~,
\end{equation}
where $\epsilon' = 2\exp(2\sum_i \mu_i)\epsilon + \sqrt{\epsilon} $
and $Z_1,\ldots,Z_r$ are i.i.d., $Z_i \sim \Po(\mu_i)$.
\end{proposition}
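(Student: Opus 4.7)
The natural starting point is the multivariate inclusion–exclusion identity obtained by expanding the product $\prod_i \mathbf{1}_{X_i = t_i}$ via the univariate identity $\mathbf{1}_{X = t} = \sum_{k \geq t}(-1)^{k-t}\binom{k}{t}\binom{X}{k}$ and taking expectations:
$$\Pr\Big[\bigcap_{i=1}^r X_i = t_i\Big] = \sum_{k_1 \geq t_1, \ldots, k_r \geq t_r}\prod_{i=1}^r(-1)^{k_i-t_i}\binom{k_i}{t_i}\,\mathbb{E}\Big[\prod_{i=1}^r\binom{X_i}{k_i}\Big].$$
The plan is to truncate this alternating series at $k_i \leq t_i + 2s$ for every $i$ (so each index stays inside the range $\{0,1,\ldots,r(T+2s)\}$ where the hypothesis~\eqref{eq-brun-req} is available), replace the surviving factorial moments by $\prod_i \mu_i^{k_i}/k_i!$ up to a multiplicative $(1\pm\epsilon)$ factor via the hypothesis, and recognize the resulting factorized expression as the corresponding truncated inclusion–exclusion formula for independent Poissons.

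The multivariate Bonferroni bound, obtained by sandwiching each $\mathbf{1}_{X_i = t_i}$ between consecutive even/odd truncations of its own univariate series and then multiplying out, gives both an upper and a lower bound for $\Pr[\bigcap_i X_i = t_i]$ in which the truncation error is controlled termwise by factorial moments lying just outside the truncation window. After applying the hypothesis to each such term and using $s > \mu$, these boundary terms form a geometric tail dominated by the condition $\mu^s/s! < \epsilon/2$ in the assumption on $\epsilon$. On the ``main'' side, the truncated sum factorizes as
$$\prod_{i=1}^r \sum_{k_i = t_i}^{t_i+2s}(-1)^{k_i-t_i}\binom{k_i}{t_i}\frac{\mu_i^{k_i}}{k_i!},$$
and each univariate factor equals $\Pr[Z_i = t_i] = e^{-\mu_i}\mu_i^{t_i}/t_i!$ up to an analogous tail of size $\leq (\mu_i^{t_i}/t_i!)\cdot\mu^s/s!$.

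The errors are then combined. The multiplicative $(1\pm\epsilon)$ slack supplied by the hypothesis contributes an absolute error of at most $\epsilon$ times the total mass $\prod_i \sum_{k_i \geq t_i}\binom{k_i}{t_i}\mu_i^{k_i}/k_i! = \prod_i (\mu_i^{t_i}/t_i!)\,e^{\mu_i} = e^{2\sum_i\mu_i}\prod_i\Pr[Z_i = t_i]$, yielding the $2e^{2\sum_i\mu_i}\epsilon$ portion of $\epsilon'$ after dividing through by $\prod_i \Pr[Z_i = t_i]$. The Bonferroni truncation tail gives a comparable absolute error, bounded by $\epsilon/2$ times the same product.

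The main subtlety, and the source of the remaining $\sqrt{\epsilon}$ term, is the passage from an absolute error bound to the required relative error against $\prod_i \Pr[Z_i = t_i]$, which for large $t_i$ can itself be tiny. The standard remedy is a split by cases: when $\prod_i \Pr[Z_i = t_i] \geq \sqrt{\epsilon}$, division is harmless and the relative error becomes $O(\epsilon/\sqrt{\epsilon}) = O(\sqrt{\epsilon})$; when $\prod_i \Pr[Z_i = t_i] < \sqrt{\epsilon}$, one uses the hypothesis together with a Markov-type estimate on $\mathbb{E}[\prod_i\binom{X_i}{t_i}]$ (which by assumption is within $(1\pm\epsilon)$ of $\prod_i \mu_i^{t_i}/t_i!$) to show that $\Pr[\bigcap_i X_i = t_i]$ is also $O(\sqrt{\epsilon})$-small, so that the ratio lies within $1 \pm O(\sqrt{\epsilon})$. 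The assumption $\epsilon < (2re^{\mu})^{-2}$ is exactly what is needed for this split to absorb the multiplicative $e^{\sum_i\mu_i}$ and $r$ factors. The main obstacle, more bookkeeping than conceptual, is to carry out the Bonferroni truncation so that the error does not accumulate an $r$-dependent constant in the leading term — this is the reason the hypothesis is required over the wider range $\{0,\ldots,r(T+2s)\}$ rather than just $\{0,\ldots,T+2s\}$, and is also what the induction-based proof in \cite{Wormald} fails to achieve.
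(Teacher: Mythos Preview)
Your overall architecture—truncate inclusion–exclusion, substitute the hypothesis, compare to the Poisson product—is the same as the paper's, but two steps do not go through as written.

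First, ``sandwiching each $\mathbf{1}_{X_i=t_i}$ between its own even/odd univariate truncations and then multiplying out'' does not give a two-sided bound: the odd (lower) truncation can be negative pointwise, so a product of lower bounds need not lie below $\prod_i \mathbf{1}_{X_i=t_i}$. The paper avoids this by invoking Meyer's genuinely multivariate Bonferroni inequalities, which truncate by \emph{total} degree $\sum_j i_j \leq M+k$ (with $M=\sum_j m_j$) and yield $\Lambda(2k+1)\leq\Pr[\cap_i X_i=m_i]\leq\Lambda(2k)$ directly. This total-degree truncation is also the actual reason the hypothesis is needed on the range $\{0,\ldots,r(T+2s)\}$: with $k=2rs$, a single coordinate $i_j$ can reach $M+k\leq rT+2rs=r(T+2s)$, whereas your box truncation $k_i\leq t_i+2s$ would only require the range $\{0,\ldots,T+2s\}$.

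Second, your account of the $\sqrt{\epsilon}$ term is wrong. The case split on whether $p:=\prod_i\Pr[Z_i=t_i]$ exceeds $\sqrt{\epsilon}$ cannot close: when $p<\sqrt{\epsilon}$, a Markov-type bound gives only $\Pr[\cap_i X_i=t_i]\leq(1+\epsilon)e^{\sum_i\mu_i}p$, which constrains the ratio to $[0,(1+\epsilon)e^{\sum_i\mu_i}]$, not to $1\pm O(\sqrt{\epsilon})$. In fact you have already written down the true source of the $\sqrt{\epsilon}$ and then passed over it: it is the approximation of each factor $\sum_{k=t_i}^{t_i+2s}(-1)^{k-t_i}\binom{k}{t_i}\mu_i^{k}/k!$ by $e^{-\mu_i}\mu_i^{t_i}/t_i!$. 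Each such factor has relative error at most $\epsilon e^{\mu}$ (via $2\mu^s/s!<\epsilon$), and compounding over $r$ factors gives $(1+\epsilon e^{\mu})^r-1$; the assumption $\epsilon<(2re^\mu)^{-2}$ yields $r\epsilon e^\mu<\sqrt{\epsilon}/2$, hence $(1+\epsilon e^\mu)^r-1\leq e^{\sqrt{\epsilon}/2}-1\leq\sqrt{\epsilon}$. This is precisely how the paper obtains the $\sqrt{\epsilon}$ (from its term $E_1$); the $(1\pm\epsilon)$ slack and the Bonferroni boundary term (the paper's $E_2$ and $E_3$) each contribute at most $\epsilon e^{2\sum_i\mu_i}p$ and together give the $2e^{2\sum_i\mu_i}\epsilon$ part of $\epsilon'$.
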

\begin{proof}
We need the following known multivariate generalization of the
Bonferroni inequalities:
\begin{theorem}[\cite{Meyer}]\label{thm-gen-bon}
Let $\mathcal{A}_i = \{A_{i j} : j \in [M_i]\}$, $i\in[r]$, denote
$r$ classes of events, and let $X_i = \sum_{j=1}^{M_i}
\mathbf{1}_{A_{i j}}$ denote the number of events in $\mathcal{A}_i$
which occur. Define:
\begin{equation}\label{eq-S-def}
S^{(i_1,\ldots,i_r)} = \mathbb{E}[\prod_j \binom{X_j}{i_j}] =
\mathop{\sum_{I_1\subset [M_1]}}_{|I_1| = i_1}\ldots
\mathop{\sum_{I_r\subset [M_r]}}_{|I_r| = i_r} \Pr[\bigcap_{j=1}^r
\bigcap_{k \in I_j } A_{j k}]\end{equation} The following holds for
all non-negative integers $m_1,\ldots,m_r$, $0 \leq m_j \leq M_j$,
and $k \geq 0$:
\begin{equation}\label{eq-bon-bounds}
\begin{array}{l}\Lambda(2k+1) \leq \Pr[\cap_i X_i = m_i] \leq
\Lambda(2k)~,\mbox{ where:}\\
\noalign{\medskip} \displaystyle{\Lambda(k) = \sum_{t=\sum
m_j}^{(\sum m_j) + k} \sum_{\sum i_j=t}(-1)^{t-\sum m_j}
\bigg(\prod_{j=1}^r \binom{i_j}{m_j}\bigg) S^{(i_1,\ldots,i_r)}}~.
\end{array}
\end{equation}\end{theorem}
As the function $f(x,k)=\sum_{l=0}^k x^l/l!$ satisfies $|f(x,k) -
\mathrm{e}^x| \leq 2 \frac{|x|^k}{k!}$ for all $x$ with $|x| \leq
\frac{k+1}{2}$, the assumption on $s$ implies that
\begin{equation}
  \label{eq-mu-sum-convergence}
 \left| \sum_{l=0}^k \frac{x^l}{l!} - \mathrm{e}^x \right| \leq
 \epsilon ~\mbox{ for all $k \geq 2s-1$ and }|x| \leq \mu~.
\end{equation}
 Let $m_1,\ldots,m_r \in \{0,\ldots,T\}$, and set $M=\sum_i m_i$ and
 \begin{equation}\label{eq-p-def}p = \Pr[\cap_i Z_i = m_i] = \prod_{i=1}^r \mathrm{e}^{-\mu_i}
 \frac{\mu_i^{m_i}}{m_i!}~.
 \end{equation}
Notice that $r(T+2s) \leq \min_i M_i$, otherwise we would get the
contradiction $\epsilon \geq 1$ from \eqref{eq-brun-req}. According
to the notations of Theorem \ref{thm-gen-bon}, by
\eqref{eq-brun-req} and the facts $M=\sum_{i=1}^r m_i$ and $m_i \leq
T$:
$$(1-\epsilon)\prod_j
\frac{\mu_j^{i_j}}{i_j!} \leq  S^{(i_1,\ldots,i_r)} \leq
(1+\epsilon)\prod_j \frac{\mu_j^{i_j}}{i_j!}\mbox{ for all
}i_1,\ldots,i_r \in \{0,\ldots,M+2rs\}~.
$$
Therefore, the following holds:
\begin{align}
 \Lambda(k) &= \sum_{t=M}^{M +
k} \sum_{\sum i_j=t}(-1)^{t-M} \bigg(\prod_{j=1}^r
\binom{i_j}{m_j}\bigg) S^{(i_1,\ldots,i_r)} \nonumber \\
& \leq  \sum_{t=M}^{M + k} \sum_{\sum i_j=t} \bigg(\prod_{j=1}^r
\binom{i_j}{m_j} \frac{\mu_j^{i_j}}{i_j!}\bigg) \left((-1)^{t-M} +\epsilon\right) \nonumber \\
& =  \bigg( \prod_{j=1}^r \sum_{i_j=m_j}^{M+k}
 \binom{i_j}{m_j} \frac{\mu_j^{i_j}}{i_j!}
(-1)^{i_j-m_j} \bigg) + \epsilon\bigg( \prod_{j=1}^r
\sum_{i_j=m_j}^{M+k}
 \binom{i_j}{m_j} \frac{\mu_j^{i_j}}{i_j!}
\bigg)
\nonumber\\
& - \sum_{i_1=m_1}^{M+k} \ldots \sum_{i_r=m_r}^{M+k}
\mathbf{1}_{\sum i_j > M+k} \bigg(\prod_{j=1}^r
\binom{i_j}{m_j}\frac{\mu_j^{i_j}}{i_j!}\bigg)\left( (-1)^{(\sum
i_j) - M} + \epsilon\right) ~. \label{eq-Lambda-bound}
\end{align}
Let $E_1,E_2,E_3$ denote the final three expressions in
\eqref{eq-Lambda-bound}, that is, $\Lambda(k) \leq E_1 + E_2 - E_3$.
A similar calculation gives $\Lambda(k) \geq E_1 - E_2 - E_3$ (with
room to spare, as we could have replaced $E_3$ by a smaller
expression by replacing $\epsilon$ by $-\epsilon$). We therefore
wish to provide bounds on $E_1,E_2,E_3$. For all $k \geq 2s-1$ we
have:
\begin{align}
\left| 1 - \frac{E_1}{p}  \right| & = \bigg| 1 -
\frac{1}{p}\prod_{j=1}^r \sum_{i_j=m_j}^{M+k}
 \binom{i_j}{m_j} \frac{\mu_j^{i_j}}{i_j!}
(-1)^{i_j-m_j}\bigg| \nonumber \\
&= \bigg| 1 - \frac{1}{p}\prod_{j=1}^r
\frac{\mu_j^{m_j}}{m_j!}\sum_{l=0}^{M-m_j+k} \frac{(-\mu_j)^l}{l!}
\bigg|  \leq (1+\epsilon \mathrm{e}^\mu)^r-1 \leq
\mathrm{e}^{\sqrt{\epsilon}/2}-1 \leq ~ \sqrt{\epsilon},
\label{eq-E1-bound}\end{align} where the first inequality is by
\eqref{eq-mu-sum-convergence}, as $\left|1 - \mathrm{e}^{\mu_j}
\sum_{l=0}^k \frac{(-\mu_j)^l}{l!} \right| \leq \epsilon
\mathrm{e}^{\mu}$, and the second follows from the assumption that
$\epsilon < (2r\mathrm{e}^\mu)^{-2}$. Similarly, for $k\geq 2s-1$:
\begin{align}
E_2 & = \epsilon\prod_{j=1}^r \sum_{i_j=m_j}^{M+k}
 \binom{i_j}{m_j} \frac{\mu_j^{i_j}}{i_j!}
 \leq \epsilon \prod_{j=1}^r \mathrm{e}^{\mu_j}
 \frac{\mu_j^{m_j}}{m_j!} =
\epsilon \exp\big(2\sum_i \mu_i\big) p~.
\label{eq-E2-bound}\end{align} For the bound on $|E_3|$, recall that
$M=\sum_i m_i$, and hence, if $\sum_j i_j \geq M + 2rs$ we must have
$i_t \geq m_t + 2s$ for some $t$. Therefore, for all $k \geq 2rs -
1$:
\begin{align}
|E_3| & =  \bigg |\sum_{i_1=m_1}^{M+k} \ldots \sum_{i_r=m_r}^{M+k}
\mathbf{1}_{\sum i_j > M+k} \bigg(\prod_{j=1}^r
\binom{i_j}{m_j}\frac{\mu_j^{i_j}}{i_j!}\bigg)\left( (-1)^{(\sum
i_j) - M} + \epsilon\right)
 \bigg|  \nonumber \\
& \leq (1+\epsilon)\sum_{i_1=m_1}^{M+k} \ldots \sum_{i_r=m_r}^{M+k}
\mathbf{1}_{\sum i_j > M+k} \prod_{j=1}^r
\binom{i_j}{m_j}\frac{\mu_j^{i_j}}{i_j!} \nonumber \\
& \leq (1+\epsilon)\sum_{t=1}^r \sum_{i_t=m_t+2s}^{M+k}
 \binom{i_t}{m_t} \frac{\mu_t^{i_t}}{i_t!} \bigg(\prod_{j\neq t}
\mathrm{e}^{\mu_j}\frac{\mu_j^{i_j}}{i_j!}\bigg) \leq \epsilon
\exp\big(2\sum_i \mu_i\big)p~,
  \label{eq-E3-bound}
\end{align}
where the last inequality is by the fact $\frac{\mu^s}{s!} <
\epsilon / 2$, which implies that
$$ (1+\epsilon)\sum_{i_t=m_t+2s}^{M+k} \binom{it}{m_t}\frac{\mu_t^{i_t}}{i_t!}
\leq (1+\epsilon) \frac{\mu_t^{m_t}}{m_t!}\sum_{l \geq 2s}
\frac{\mu_t^l}{l!}  \leq (1+\epsilon)\frac{\mu_t^{m_t}}{m_t!}\cdot 2
\frac{\mu_t^{2s}}{(2s)!} \leq
\frac{1+\epsilon}{2}\epsilon^2\mathrm{e}^{\mu_t} \leq
\frac{\epsilon}{r} \mathrm{e}^{\mu_t}~.$$
 Altogether, combining \eqref{eq-E1-bound},
\eqref{eq-E2-bound} and \eqref{eq-E3-bound} we get the following for
$k \geq 2rs-1$:
$$ \left| \frac{\Lambda(k)}{p} - 1 \right| \leq \sqrt{\epsilon} + 2\exp\big(2\sum_i\mu_i\big)\epsilon
= \epsilon'~.$$ The proof is completed by the fact that $
\Lambda(2rs)\leq \Pr[\cap_i X_i=m_i] \leq \Lambda(2rs-1)$.
\end{proof}

\section{A Poisson approximation for high-girth expanders}\label{sec::high-girth}
In this section, we prove Proposition \ref{prop-poisson} and its
corollary, Theorem \ref{thm-count}, which are analogous to
Proposition \ref{prop-poisson-fixed} and Theorem
\ref{thm-count-fixed}, but also provide an estimate on the rate
 of convergence to the limiting distributions. This is imperative when
looking at vertices which are visited $t$ times, for $t$ tending to
$\infty$ with $n$.
 The proof of Proposition \ref{prop-poisson} follows the ideas of the proof of Proposition
\ref{prop-poisson-fixed}, where instead of the simple version of
Brun's Sieve, we use Proposition \ref{prop-multi-brun} proved in
Section \ref{sec::brun}.
\subsection{Proof of Proposition \ref{prop-poisson}}\label{sec::approx}
Recall that $g \geq c \log_{d-1}\log n$ for some fixed $c > 6$. We
need the following definitions:
\begin{eqnarray}
 \tau  &=& \min_t \left\{ \Big|\widetilde{P}^{(k)}_{u v} -
\frac{1}{n} \Big| \leq \frac{1}{n^2} ~\mbox{ for all $u,v\in V$ and
$k \geq
t$}\right\}~.\label{eq-fine-mixing-time-def} \\
 T &=& \lfloor \log n\rfloor~,\label{eq-T-def}\\
 h &=& (\log n)^{3-\frac{c}{2}}~.\label{eq-T-h-def}
\end{eqnarray}
Recalling \eqref{eq-rho-convergence}, for $k = \Omega(\log n)$ we
have $\big|\widetilde{P}_{u v}^{(k)} - \frac{1}{n}\big| \leq
n^{-\Omega(1)}$, giving
\begin{equation}\label{eq-tau-bound}\tau = O(\log n)~.\end{equation}
 According to the notation of Proposition
\ref{prop-multi-brun}, set $\mu_i=\mu$ for all $i$, let $h$ play the
role of $\epsilon$, and define $h'$ to be the analogue of
$\epsilon'$:
\begin{equation}\label{eq-h'-def}h' =
2\mathrm{e}^{2r\mu}h+\sqrt{h}=(1+o(1))(\log
n)^{\frac{6-c}{4}}~.\end{equation} It follows from Proposition
\ref{prop-multi-brun} that, in order to show that
$$ \left|\frac{\Pr[\bigcap_{i=1}^r X_i = t_i]}{\prod_{i=1}^r\Pr[Z =
t_i]} - 1\right| \leq O(h') \mbox{ for all }t_1,\ldots,t_r \in
\{0,\ldots,T\}~,$$ it suffices to show that for some $s$ satisfying
$s > \mu$ and $2\frac{\mu^s}{s!} < h < (2r\mathrm{e}^\mu)^{-2}$ we
have: \begin{equation}\label{eq-brun-req-revisited}
\bigg|\frac{\mathbb{E}[\prod_{i=1}^r\binom{X_i}{t_i}]}{\mu^t/\prod_{i=1}^r
{t_i}!} - 1\bigg| \leq O(h)\mbox{ for all } t_1,\ldots,t_r \in
\{0,1,\ldots,r (T+2s)\}~.\end{equation} Substituting $s = T$, the
requirements $T > \mu$ and $h < (2r\mathrm{e}^\mu)^{-2}$ immediately
hold for a sufficiently large $n$, as $T = \omega(1)$, $h=o(1)$ and
both $\mu$ and $r$ are fixed. The requirement $2\frac{\mu^T}{T!}<h$
holds as well, since $2\frac{\mu^T}{T!} = \exp\left(-(1-o(1))(\log
n)(\log\log n)\right)$, and for a sufficiently large $n$, this term
is clearly smaller than $h = \exp\left(-O(\log\log n)\right)$.
Therefore, proving \eqref{eq-brun-req-revisited} for $s=T$ would
complete the proof of the proposition, that is, we need to show that
 \begin{equation}\label{eq-brun-req-s=T}
\bigg|\frac{\mathbb{E}[\prod_{i=1}^r\binom{X_i}{t_i}]}{\mu^t/\prod_{i=1}^r
{t_i}!} - 1\bigg| \leq O(h) \mbox{ for all } t_1,\ldots,t_r \in
\{0,1,\ldots,3rT\}~.\end{equation} Let $t_1,\ldots,t_r \in
\{0,\ldots, 3rT\}$, and set
$$t = \sum_{i=1}^r t_i \leq 3r^2 T = O(\log n)~.$$
 Let $(v_1,w_1,\ldots,w_m)$ denote the path of the
non-backtracking random walk $\widetilde{W}$, and as before, for all
$i\in[r]$ and $j\in[m]$ let $X_{i j}$ denote the indicator for the
event that $\widetilde{W}$ visits $v_i$ in position $j$:
$$ X_{i j}=\mathbf{1}_{\{w_j=v_i\}}~,\quad X_i = \sum_{j=1}^m
X_{i j}~.$$ As in the proof of Proposition \ref{prop-poisson-fixed},
we next define the collection $\mathcal{I}_s$, this time for $L =
\tau = O(\log n)$. For all $s \in \{0,\ldots,t=\sum t_i\}$, let
$\mathcal{I}_s$ denote the collection of $r$-tuples
$(I_1,\ldots,I_r)$ of disjoint subsets of $[m]$, $|I_j|=t_j$, so
that there are precisely $s$ consecutive elements of $\cup_j I_j
\cup \{0\}$ whose distance is less than $\tau$:
\begin{equation}\label{eq-calI-def-2}
\mathcal{I}_s = \left\{ (I_1,\ldots,I_r) ~:~
\begin{array}{l}\bigcup_j I_j = \{x_1,x_2,\ldots,x_t\} \subset [m],~x_0 = 0,~|I_j|=t_j \mbox{ for all $j$,}\\
x_{i-1} < x_i \mbox{ for all $i$, and } |\{1 \leq i \leq
t:x_i-x_{i-1} < \tau \}|=s\end{array} \right\}~.\end{equation} The
facts that the events $X_{i j}= 1$ and $X_{i' j}=1$ are disjoint for
$i \neq i'$ implies that
\begin{equation}
  \label{eq-factorial-moment-2}
  \mathbb{E}\Big[\prod_{i=1}^{r}\binom{X_i}{t_i}\Big] =
    \sum_{(I_1,\ldots,I_r)\in \mathcal{I}_s}
\Pr[\bigcap_{i\in[r]} \bigcap_{j\in I_i} X_{i j}=1]~.
\end{equation}
By definition \eqref{eq-fine-mixing-time-def}, for all $v_i$, $v_j$
and $k \geq \tau$, $\widetilde{P}_{v_i v_j}^{(k)} \leq
n^{-1}+n^{-2}$. The following claim estimates the sum of the
probabilities $\widetilde{P}_{v_i v_j}^{(k)}$ over all $k < \tau$.
\begin{claim}\label{cl-visit-bound-2} Let $G$ be as above, and define
$M=\max_{i,j \in [r]}\sum_{k<\tau} \widetilde{P}_{v_i v_j}^{(k)}$.
Then $M = O\left((\log n)^{1-\frac{c}{2}}\right)$.
\end{claim}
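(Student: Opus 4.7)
The plan is to split $\sum_{k<\tau} \widetilde{P}^{(k)}_{v_i v_j}$ at the threshold $k = g$ and estimate the two pieces separately, mirroring the proof of Claim \ref{cl-visit-bound} but now keeping quantitative control of the dependence on $g$.

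First, I would dispose of the terms with $k < g$ by observing that they all vanish. If $i \neq j$, no walk of length less than $g$ can connect $v_i$ and $v_j$, because the pairwise distance hypothesis ensures $\delta(v_i, v_j) \geq g$. If $i = j$, the girth assumption (or the weaker cycle-freeness hypothesis of Remark \ref{rem-girth-req}) prevents any closed non-trivial walk of length less than $g$ through $v_i$, so $\widetilde{P}^{(k)}_{v_i v_i} = 0$ as well. Hence only the range $g \leq k < \tau$ contributes to $M$.

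For that range I would apply the mixing-rate estimate \eqref{eq-rho-convergence} of \cite{ABLS}, which gives $\widetilde{P}^{(k)}_{v_i v_j} \leq 1/n + (1+o(1))\rho^k$ uniformly in $i,j$. Using the bound $\rho \leq \max\{\lambda/d,\,1/\sqrt{d-1}\}$ from \cite{ABLS} together with the hypothesis $g \geq c\log_{d-1}\log n$, a direct computation gives
$$\rho^g \;\leq\; (d-1)^{-g/2} \;\leq\; (\log n)^{-c/2}.$$
Combining with $\tau = O(\log n)$ from \eqref{eq-tau-bound},
$$M \;\leq\; \frac{\tau - g}{n} + \sum_{k=g}^{\tau-1}(1+o(1))\rho^k \;=\; O\!\left(\tfrac{\log n}{n}\right) + O\!\left(\tau \cdot \rho^g\right) \;=\; O\!\left((\log n)^{1-c/2}\right),$$
as claimed.

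The only subtlety is the exponent bookkeeping at the end: one must verify that the value of $\rho$ coming out of the Chebyshev-polynomial analysis of \cite{ABLS} is at most $(d-1)^{-1/2}$ in the regime at hand, so that the lower bound on $g$ indeed translates into the polylogarithmic decay $\rho^g \leq (\log n)^{-c/2}$. Once this is in place, no further obstacle remains: the girth forces the probability to be $0$ before time $g$, and spectral mixing takes over immediately afterward, leaving only routine summation.
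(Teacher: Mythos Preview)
Your argument has a genuine gap at exactly the point you flag as a ``subtlety'': the inequality $\rho \leq (d-1)^{-1/2}$ is \emph{not} true under the stated hypotheses. From \eqref{eq-rho-val}, $\rho = \psi\big(\lambda/(2\sqrt{d-1})\big)/\sqrt{d-1}$, and since $\psi(x)\geq 1$ for all $x\geq 0$, we always have $\rho \geq (d-1)^{-1/2}$, with equality only in the Ramanujan case $\lambda \leq 2\sqrt{d-1}$. The setting of Proposition~\ref{prop-poisson} merely assumes fixed $\lambda < d$, so $\rho$ may be any fixed constant in $[(d-1)^{-1/2},1)$. With $g = c\log_{d-1}\log n$, your spectral bound yields only $\rho^g = (\log n)^{-c\,\log(1/\rho)/\log(d-1)}$, which for $\rho > (d-1)^{-1/2}$ is strictly larger than $(\log n)^{-c/2}$. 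Thus the claimed bound $M = O\big((\log n)^{1-c/2}\big)$ does not follow, and the precise exponent matters downstream, since the proof of Proposition~\ref{prop-poisson} requires $t^2 M = O(h)$ with $t=O(\log n)$ and $h=(\log n)^{3-c/2}$.

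The paper circumvents this by abandoning the spectral estimate for the range $g\leq k<\tau$ and instead using a purely combinatorial bound coming from the local tree structure. Setting $\ell=\lfloor (g-1)/2\rfloor$, the $\ell$-neighborhood of $v_j$ is a tree (by the girth assumption), and a non-backtracking walk of length $k\geq \ell$ ending at $v_j$ must spend its last $\ell$ steps climbing the unique path from a boundary vertex to the root; each such step succeeds with probability $1/(d-1)$. Hence $\widetilde{P}^{(k)}_{u\,v_j} \leq (d-1)^{-\ell}$ for every $u$ and every $k\geq \ell$, regardless of $\lambda$. Summing over $g\leq k<\tau$ and using $\tau=O(\log n)$, $\ell\geq (c/2)\log_{d-1}\log n - O(1)$ gives exactly $M=O\big((\log n)^{1-c/2}\big)$. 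The key point is that this tree argument extracts the factor $(d-1)^{-g/2}$ directly from the girth, whereas the mixing-rate bound only gives $\rho^g$, which is generically too weak.
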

\begin{proof} Let $v_i$ and $v_j$ denote two (not necessarily
distinct) elements of $\{v_1,\ldots,v_r\}$. By the assumption on the
pairwise distances of $v_1,\ldots,v_r$ and the girth of $G$,
$\widetilde{P}_{v_i v_j}^{(k)}=0$ for all $k < g$. It remains to
estimate $\sum_{k=g}^{\tau -1} \widetilde{P}_{v_i v_j}^{(k)}$.

Set $\ell = \lfloor \frac{g-1}{2}\rfloor$, and consider $U$, the set
of vertices of $G$ whose distance from $v_j$ is at most $\ell$.
Since the girth of $G$ is at least $g$, the induced subgraph of $G$
on $U$ is isomorphic to a $d$-regular tree. Next, examine a
non-backtracking walk of length $k \geq \ell$ from $u$ to $v$;
crucially, since the walk cannot backtrack, the last $\ell$ vertices
along the walk must form a path from a leaf of the above mentioned
tree, up to its root. In each of the $\ell$ steps along this path
there is a probability of $1-\frac{1}{d-1}$ to stray from the path,
hence $\widetilde{P}^{(k)}_{u v} \leq (d-1)^{-\ell}$. Altogether,
$$\sum_{k=g}^{\tau -1} \widetilde{P}_{v_i v_j}^{(k)} \leq
\frac{\tau-g}{(d-1)^{\lfloor(g-1)/2\rfloor}} = O\left((\log
n)^{1-\frac{c}{2}}\right)~,$$
 as
required.
\end{proof}
Letting $i_1,\ldots,i_s\in[m]$ denote the $s$ indices of the $x_i$-s
which satisfy $|x_i-x_{i-1}| <
\tau$ in the definition \eqref{eq-calI-def-2} of $\mathcal{I}_s$, and defining
$v(x_i)=v_j$, where $j$ is such that $x_i\in I_j$, we
have:
\begin{align}
& \sum_{(I_1,\ldots,I_r)\in \mathcal{I}_s}
\Pr[\bigcap_{i\in[r]} \bigcap_{j\in I_i} X_{i j}=1] \nonumber \\
& \leq  \binom{t}{t_1,\ldots,t_r}\binom{m}{t-s}\binom{t}{s}
\left(\frac{1+{n^{-1}}}{n}\right)^{t-s}
\sum_{k_1=1}^{\tau-1}\ldots\sum_{k_s=1}^{\tau-1} \prod_{j=1}^{s} \widetilde{P}^{(k_j)}_{v(x_{i_j-1}) v(x_{i_j})} \nonumber \\
& \leq  \binom{t}{t_r,\ldots,t_r}\binom{m}{t-s}\binom{t}{s}
\left(\frac{1+{n^{-1}}}{n}\right)^{t-s} M^s~.
\label{eq-upper-bound-Irs-2}\end{align} Let $\xi(s)$ denote the
right hand side of \eqref{eq-upper-bound-Irs-2}. Recalling that
$m=\Theta(n)$ and $t = O(\log n)$, the following holds for all $s <
t$:
\[ \frac{\xi(s+1)}{\xi(s)} = \frac{(t-s)^2}{(m-t+s+1)(s+1)}\cdot
\frac{ n}{1+n^{-1}}\cdot M = O(t^2 M)=O\left((\log
n)^{3-\frac{c}{2}}\right) = o(1)~,\] where the last equality is by
the fact that $c>6$. Combining this with the fact that, as
$t=n^{o(1)}$, $(1+n^{-1})^t = 1+O(n^{-1+o(1)})=1+o(h)$, we get:
\begin{align}\sum_{s=0}^{t}\xi(s) & \leq
\frac{\xi(0)}{1-O(t^2 M)}=
\left(1+ O(t^2 M)\right)\binom{t}{t_1,\ldots,t_r}\binom{m}{t}\left(\frac{1+n^{-1}}{n}\right)^t \nonumber \\
& \leq \left(1+O\left((\log n)^{3-\frac{c}{2}}\right)\right)
\frac{\mu^t}{\prod_{i=1}^r t_i!}
 \leq (1+O(h))\frac{\mu^t}{\prod_{i=1}^r t_i!}  ~,\nonumber\end{align} and:
\begin{equation}\label{eq-factorial-moment-upper-2}
  \mathbb{E}\Big[\prod_{i=1}^{r}\binom{X_i}{t_i}\Big] \leq
  \sum_{s=0}^{t} \sum_{(I_1,\ldots,I_r)\in \mathcal{I}_s}
\Pr[\bigcap_{i\in[r]} \bigcap_{j\in I_i} X_{i j}=1]
 \leq (1+O(h))\prod_{i=1}^r \frac{\mu^{t_i}}{t_i!}~.\end{equation}
For the other direction, consider $\mathcal{J}$, the collection of all $r$-tuples
of disjoint subsets of $[m]\setminus [\tau]$, $(I_1,\ldots,I_r)$, where $|I_j|=t_j$
and the pairwise distances of the indices all exceed $\tau$:
\begin{equation}\label{eq-calJ-def-1}
\mathcal{J} = \left\{ (I_1,\ldots,I_r)~:~\begin{array}{l}\bigcup_j
I_j = \{x_1,x_2,\ldots,x_t\} \subset \{\tau+1,\ldots,m\},\\
|I_j|=t_j \mbox{ for all $j$ and }x_{i+1} > x_i + \tau\mbox{ for all
$i$}\end{array} \right\}~.\end{equation} By the definition
\eqref{eq-fine-mixing-time-def} of $\tau$, and the fact that
$(1+n^{-1})^t = 1+O(n^{-1+o(1)})=1+o(h)$,
\begin{align} \mathbb{E}\Big[\prod_{i=1}^{r}\binom{X_i}{t_i}\Big]
&\geq
 \sum_{(I_1,\ldots,I_r)\in \mathcal{J}}
\Pr[\bigcap_{i\in[r]} \bigcap_{j\in I_i} X_{i j}=1]
 \geq   \sum_{(I_1,\ldots,I_r)\in\mathcal{J}}
 \left(\frac{1-n^{-1}}{n}\right)^t \nonumber\\
&= \binom{t}{t_1,\ldots,t_r} \binom{m-\tau t}{t}
\left(\frac{1-n^{-1}}{n}\right)^t =
\frac{(1-o(h))\mu^t}{\prod_{i=1}^r t_i!}~.
\label{eq-factorial-moment-lower-2} \end{align} Combining
\eqref{eq-factorial-moment-upper-2} and
\eqref{eq-factorial-moment-lower-2}, we obtain that
\eqref{eq-brun-req-s=T} holds for all $0 \leq t_1,\ldots,t_r \leq
3rT$, completing the proof. \qed

\subsection{Proof of Theorem \ref{thm-count}}
The proof will follow from Proposition \ref{prop-poisson} using a
second moment argument, in a manner analogous to Theorem
\ref{thm-count-fixed}. Let $g = 10 \log_{d-1}\log n$; by the
assumption on $G$, Proposition \ref{prop-poisson} implies that for
any two vertices $u,v \in V$, whose distance, as well as their
distance from $w_0$, are all at least $g$, we have:
\begin{align}
&\Big|\frac{\Pr[X_u = t]}{1/(\mathrm{e}t!)} - 1\Big| =
O\left(\frac{1}{\log n}\right) \mbox{ for all }t \leq \log
n~,\label{eq-prob-r=1-bound-2} \\
& \Big|\frac{\Pr[X_u = X_v = t]}{1/(\mathrm{e}t!)^2} - 1\Big| =
O\left(\frac{1}{\log n}\right) \mbox{ for all }t\leq \log
n~.\label{eq-prob-r=2-bound-2}
\end{align}
Let $t$ be some integer satisfying
\begin{equation}\label{eq-t-range}t \leq \left(1 + c\frac{\log\log\log n}{\log\log
n}\right)\frac{\log n}{\log\log n}\mbox{ for some }c < 1~,
\end{equation}
and let $N_t$ denote the number of vertices which $\widetilde{W}$
visits precisely $t$ times. We wish to obtain an estimate on the
probability that $N_t = (1+o(1))n/(\mathrm{e}t!)$. The above choice
of $t$ implies that:
\begin{equation}\label{eq-poisson-exp}
\frac{n}{\mathrm{e}t!} \geq \exp\left((1-c-o(1))\log n
\frac{\log\log\log n}{\log\log n}\right) = \exp\left((1-c)(\log
n)^{1-o(1)}\right)~.
\end{equation}
Hence, the effect of any $(\log n)^{O(1)}$ positions along
$\widetilde{W}$ on this value is negligible, and
 we may ignore the set of vertices whose distance from $w_0$ is less
 than $g$. Therefore, let $U$ denote the set of vertices whose distance from $w_0$
 is at least $g$, let $X_u$ ($u \in U$) denote the number of visits which $\widetilde{W}$ makes to
 $u$, and let $N_t' = \sum_{u \in U} \mathbf{1}_{\{X_u=t\}}$. According to this definition, we have:
\begin{equation}\label{eq-Nt'-Nt-relation-2} \frac{| N_t - N'_t |}{n/(\mathrm{e}t!)} =
\exp\left(-(1-c)(\log n)^{1-o(1)}\right)~,\end{equation} and it
remains to determine the behavior of $N'_t$. By
\eqref{eq-prob-r=1-bound-2},
$$\left|\frac{\mathbb{E}N'_t}{n/(\mathrm{e}t!)} - 1 \right|=
\bigg|\Big(\sum_{u\in U}\frac{\Pr[X_u = t]}{n/(\mathrm{e}t!)}\Big) -
1 \bigg| = O(1/\log n) ~,$$ and we deduce from
\eqref{eq-poisson-exp} that
$$ \mathbb{E}N'_t = (1-o(1))\frac{n}{\mathrm{e}t!} =\Omega\left(\exp\left((1-c)(\log
n)^{1-o(1)}\right)\right)~.$$ Furthermore, denoting by $\delta(u,v)$
the distance between two vertices $u,v$, the following holds:
\begin{align}\var(N'_t) & \leq \mathbb{E}N'_t + \sum_{u \in U}\sum_{v \in U}
\Big(\Pr[X_u=X_v=t]-\Pr[X_u=t]\Pr[X_v=t] \Big)\nonumber\\
& \leq  \mathbb{E}N'_t + \bigg(\sum_{u \in U} \mathop{\sum_{v \in
U}}_{\delta(u,v) < g} \Pr[X_u=t]\bigg) +
\bigg(\sum_{u \in U} \mathop{\sum_{v \in U}}_{\delta(u,v) \geq g}
O\Big(\frac{1}{\log n}\Big) \Pr[X_u=t]^2\bigg) \nonumber \\
& \leq \left(1 + (\log n)^{O(1)}\right)\mathbb{E}N'_t +
O\left(\frac{(\mathbb{E}N'_t)^2}{\log n}\right)  =
O\left(\frac{(\mathbb{E}N'_t)^2}{\log n}\right)~.
\end{align}
Set $h=h(n)=\log\log\log n$. Applying Chebyshev's inequality gives
the following:
$$ \Pr \Big[\Big|\frac{N'_t}{n/(\mathrm{e}t!)} - 1\Big| \geq \frac{1}{h} \Big]  \leq
\Pr \Big[|N'_t - \mathbb{E}N'_t| \geq
\frac{1}{2h}\cdot\frac{n}{\mathrm{e}t!} \Big] = O\left(
\frac{h^2}{\log n}\right)~,$$ and summing this probability for all
$t$ in the range specified in \eqref{eq-t-range} (containing
$O(\frac{\log n}{\log\log n})$ values) we deduce that with high
probability:
\begin{equation}\label{eq-N't-result} \left|\frac{N'_t}{n/(\mathrm{e}t!)} -
1\right| \leq \frac{1}{\log\log\log n}\mbox{ for all }t \leq
\left(1+c\frac{\log\log\log n}{\log\log n} \right)\frac{\log
n}{\log\log n}~. \end{equation} Recalling the relation between
$N'_t,N_t$ in \eqref{eq-Nt'-Nt-relation-2}, we obtain that when
replacing $N'_t$ by $N_t$, \eqref{eq-N't-result} holds as well. It
remains to show that with high probability, $N_t=0$ for all $t
>t_0$, where
\begin{equation}\label{eq-t0-def}t_0 = \left(1 + c\frac{\log\log\log n}{\log\log
n}\right)\frac{\log n}{\log\log n}\mbox{ for some }c > 1~.
\end{equation}
Let $u$ be a vertex of $G$, and let $X_u$ denote the number of
visits that $\widetilde{W}$ makes to $u$. Consider $\widetilde{W}'$,
a non-backtracking random walk of length $n$ on $G$ starting from
$u$. Proposition \ref{prop-poisson} (for the case of one variable
$v_1=u$) implies that:
$$ \Pr[X'_u = t] = \frac{1+o(1)}{\mathrm{e}t!}~\mbox{
for all }t \leq \log n~,$$ where $X'_u$ counts the number of visits
that $\widetilde{W}'$ makes to $u$. Clearly, the probability that
$X_u > t_0$ is bounded from above by the probability that $X'_u \geq
t_0$ (as we can always condition on the first visit to $u$).
Therefore:
$$ \Pr[X_u > t_0] \leq \Pr[X'_u \geq t_0] = 1 - \sum_{l < t_0}\Pr[X'_u=l] \leq
\frac{1+o(1)}{\mathrm{e}t_0!}~.$$ We deduce that the expected number
of vertices with more than $t_0$ visits satisfies:
\begin{align}\mathbb{E}|\{u \in
V:X_u > t_0\}| \leq (1+o(1))\frac{n}{\mathrm{e}t_0!} =
\exp\left((1-c)(\log n)^{1-o(1)}\right)=o(1)~.\nonumber\end{align}
This completes the proof of the theorem. \qed

\section{Concluding remarks and open
problems}\label{sec::conclusion}
\begin{itemize}
  \item We have shown that the distribution of the number
of visits at vertices made by a non-backtracking random walk of
length $n$ on $G$, a regular $n$-vertex expander of fixed degree and
large girth, tends to a Poisson distribution with mean $1$.
Furthermore, if the girth is $\Omega(\log\log n)$ we prove the
following concentration result: with high probability, the number of
vertices visited $t$ times is $(1+o(1))\frac{n}{\mathrm{e}t!}$
uniformly over all $t \leq (1-o(1))\frac{\log n}{\log\log n}$, and
$0$ for all $t \geq (1+o(1))\frac{\log n}{\log\log n}$ (in fact, the
threshold window we get is sharper by a factor of
$\frac{\log\log\log n}{\log\log n}$). In particular, we obtain an
alternative proof for the typical maximal number of visits to a
vertex in the above walk, and (slightly) improve upon the estimate of this
maximum in \cite{ABLS}.
\item The above result implies that the distribution of the visits at vertices made by
a non-backtracking random walk of length $n$ on an $n$-vertex
expander of high girth is asymptotically the same as the result of
throwing $n$ balls to $n$ bins independently and uniformly at
random.
\item The main tool in the proof is an extended
version of Brun's Sieve, which includes error estimates and may be
of independent interest. Combining this result with some additional
ideas, we show that the variables counting the number of visits to
vertices, which are sufficiently distant apart, are asymptotically
independent Poisson variables. This implies the required result on
the overall distribution of the number of visits at vertices.

\item Theorem \ref{thm-count} characterizes the distribution of visits
at vertices in non-backtracking random walk on a high-girth regular
expander. For such a graph on $n$ vertices, the values $N_t/n$
converge to $(1+o(1))/(\mathrm{e}t!)$, where $N_t$ is the number of
vertices visited precisely $t$ times in a walk of length $n$ as
above. Moreover, we show that the above convergence of
$\{\frac{N_t}{n}\}$ is uniform over all values of $t$ up to roughly
$\frac{\log n}{\log\log n}$, after which $N_t$ is almost surely $0$.

It seems interesting to investigate this distribution,
$(\frac{N_0}{n},\frac{N_1}{n},\ldots,\frac{N_n}{n})$, as a parameter
of general vertex transitive graphs, and determine it for additional
families of such graphs.

\item Corollary \ref{cor-max} determines that the maximum number of
visits to a vertex, made by a typical non-backtracking random walk
of length $n$ on a high-girth $n$-vertex regular expander, is
$(1+o(1))\frac{\log n}{\log\log n}$ (with an improved error term
compared to the results of \cite{ABLS}).

For which other families of $d$-regular graphs, with $d\geq 3$, is
this maximum $\Theta(\frac{\log n}{\log\log n})$?

\item The ``random setting'', where $n$ balls are thrown to $n$ bins, uniformly at random,
results in a maximal load of $(1+o(1))\frac{\log n}{\log\log n}$; it
seems plausible that this bound is the smallest maximal load
possible for a non-backtracking walk on any regular graph of degree
at least 3. Is it indeed true that for any $n$-vertex $d$-regular
graph with $d\geq 3$, a non-backtracking random walk of length $n$
visits some vertex at least $(1+o(1))\frac{\log n}{\log\log n}$ times almost
surely?
\end{itemize}

\noindent \textbf{Acknowledgement} We would like to thank Itai
Benjamini for useful discussions.


\begin{thebibliography}{alpha}

\bibitem{AKLLR}
R. Aleliunas, R.M. Karp, R.J. Lipton, L. Lov\'{a}sz and C. Rackoff,
Random walks, universal traversal sequences, and the complexity of
maze problems, in: Proc. of 20th FOCS (1979), pp. 218-223.

\bibitem{ABLS}
N. Alon, I. Benjamini, E. Lubetzky and S. Sodin, Non-backtracking
random walks mix faster, to appear.

\bibitem{AC}
N. Alon and M. Capalbo, Optimal universal graphs with deterministic
embedding, to appear.

\bibitem{ProbMethod}
N. Alon and J. H. Spencer, {\em The Probabilistic Method}, Second
Edition, Wiley, New York, 2000.

\bibitem{BF}
G. Barnes and U. Feige, Short random walks on graphs, Proc. of 25th
STOC (1993), pp 728-737. Also in SIAM J. Disc. Math, 9(1), 19-28,
1996.

\bibitem{Bonferroni}
C.E. Bonferroni, Teoria statistica delle classi e calcolo delle
probabilit\'{a}, Pubblicazioni del R Istituto Superiore di Scienze
Economiche e Commerciali di Firenze 8 (1936), pp. 3-62.

\bibitem{Feller}
W. Feller, {\em An Introduction to Probability Theory and its
Applications}, Vol I, Wiley, 1968.

\bibitem{AlgabraicGraphTheory}
C. Godsil and G. Royle, {\em Algebraic Graph Theory}, volume 207 of
Graduate Text in Mathematics, Springer, New York, 2001.

\bibitem{JLR}
S. Janson, T. {\L}uczak and A. Ruci\'nski, {\em Random Graphs}, John
Wiley and Sons, 2000.

\bibitem{KS}
M. Krivelevich and B. Sudakov, Pseudo-random graphs, More Sets,
Graphs and Numbers, Bolyai Society Mathematical Studies 15,
Springer, 2006, 199-262.

\bibitem{Lovasz}
L. Lov\'asz, Random walks on graphs: a survey, in: Combinatorics,
Paul Erd\H{o}s is Eighty, Vol. 2 (ed. D. Mikl\'os, V. T. S\'os, T.
Sz\H{o}nyi), J\'anos Bolyai Mathematical Society, Budapest, 1996,
353-398.

\bibitem{Meyer}
R.M. Meyer, Note on a `Multivariate' Form of Bonferroni's
Inequalities, The Annals of Mathematical Statistics, Vol. 40, No. 2
(1969), pp. 692-693.

\bibitem{Reingold}
O. Reingold, Undirected ST-connectivity in log-space, STOC 2005, pp.
376-385.

\bibitem{Sinclair}
A. Sinclair, Improved Bounds for Mixing Rates of Markov Chains and
Multicommodity Flow, Combinatorics, Probability and Computing 1
(1992), pp. 351-370.

\bibitem{Wormald}
N.C. Wormald,  Models of random regular graphs,
 in: Surveys in Combinatorics, (ed. J.D. Lamb, D.A. Preece), London Mathematical Society Lecture Note Series, vol 276, pp. 239-298, Cambridge University Press, Cambridge, 1999.

\end{thebibliography}
\end{document}